\documentclass[11pt]{amsart}
\usepackage{preamble}

\usepackage[headheight=15pt, headsep=15pt, footskip=27pt, bottom=2.4cm, left=2.4cm, right=2.4cm]{geometry}

\begin{document}

\title[Automorphism groups and Homogeneous fibrations]{Automorphism groups and Homogeneous fibrations on projective  varieties with nef anticanonical divisors}

\subjclass[2020]{14M17, 14M22, 14L30}

\date{\today}

\begin{abstract}
We explore the homogeneous structures of Albanese morphisms and maximal rationally chain-connected fibrations of algebraic varieties with nef anticanonical divisors. Furthermore, we show that there exists a natural group homomorphism associated with a maximal rationally chain-connected fibration, which induces the Chevalley decomposition of $\Aut^0$ up to an isogeny. This result is analogous to a similar theorem of Nishi and Matsumura for Albanese morphisms. In addition, we reprove Nishi and Matsumura's theorem using the modern language of algebraic geometry.
\end{abstract}

\author{Zhan Li}
\address[Zhan Li]{Department of Mathematics, Southern University of Science and Technology, 1088 Xueyuan Rd, Shenzhen 518055, China} \email{lizhan@sustech.edu.cn, lizhan.math@gmail.com}

\author{Jinsong Xu}
\address[Jinsong Xu]{Department of Mathematical Sciences, Xi’an Jiaotong-Liverpool University, No.111 Renai Rd, Industrial Park, Suzhou 215000, China} \email{jinsong.xu@xjtlu.edu.cn}

\maketitle

\tableofcontents

\section{Introduction}\label{sec: intro}

Throughout this paper, we work over the field of complex numbers. 

Positivities of canonical bundles shape the geometry of algebraic varieties and the natural morphisms associated with them. When the canonical bundle of $X$ is trivial, it is shown by \cite[Theorem 1]{Cal57} that the Albanese morphism $a_X$ is locally trivial. This is generalized to varieties with mild singularities in \cite[Theorem 8.3]{Kaw85} (also see \cite{KL09}). This result is generalized to log pairs with trivial log canonical divisors by \cite[Theorem 0.1]{Amb05}. The arguments of \cite{Kaw85, Amb05} rely on deep results from the deformation theory and the Hodge theory. On the other hand, the second named author \cite[Theorem 1.1]{Xu20} generalizes \cite[Theorem 0.1]{Amb05} further and shows that the Albanese morphism $a_X$ can be identified with the natural $\Aut^0(X)$-equivariant morphism
\[
\Aut^0(X) \times^K F \to \Aut^0(X)/K,
\] where $K\subset \Aut^0(X)$ is a finite subgroup, $F$ is the fiber of $a_X$ over the identity element and $\Aut^0(X) \times^K F$ is the quotient of $\Aut^0(X) \times F$ under the natural action of $K$. This generalization explicitly reveals the homogeneous structure of the Albanese morphisms. Such a homogeneous structure plays a pivotal role in the study of the Morrison-Kawamata cone conjecture of Calabi-Yau fibrations (\cite{Kaw97} and \cite{Li23}). The argument of \cite[Theorem 1.1]{Xu20} differs completely from the previous work. It relies on the theory of algebraic groups, more precisely, a result of Brion (\cite[Theorem 2]{Bri10}, \cite[Proposition 1.2]{Bri12}) which generalizes a theorem of Nishi and Matsumura (\cite[Theorem 2]{Mat63}).

The local triviality of the Albanese morphism is also conjectured to hold for a K\"ahler manifold with nef anticanonical divisor (see \cite{DPS96}). This conjecture was first established in the projective setting by \cite{Cao19}. Subsequent works extended the result to log pairs with mild singularities and made further progress in the K\"ahler setting (see \cite{PZ19, Wan22, EIM23}), culminating in the proof of the conjecture in full generality by \cite{MWWZ25}. Additionally, the maximal rationally chain-connected fibrations (MRCC fibration) of such varieties exhibit analogous properties \cite{CH19, MW23}. The proof of these results relies on the positivity of the direct images of pluricanonical divisors. In view of \cite{Xu20}, it is still expected that the homogeneous structures can be attached to Albanese morphisms and the MRCC fibrations in this setting. As shown in this paper, this is indeed the case. 
 
 First, for Albanese morphisms, we have the following theorem.

\begin{theorem}\label{thm: homogeneous fibration of Albanese morphism}
Let $(X, \De)$ be a projective klt pair with nef log anticanonical divisor $-(K_X+\De)$. Then the Albanese morphism $a_X: X \to \Alb(X)$ can be identified with an $\Aut^0(X)$-equivariant fibration
\[
\psi: \Aut^0(X) \times^H F \to \Aut^0(X)/H,
\] where $F$ is the fiber of $a_X$ over the identity and $H \subset \Aut^0(X)$ is a normal subgroup such that $H/\Aut^0(X)_{\rm aff}$ is a finite group.
\end{theorem}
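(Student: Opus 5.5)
We combine the known local triviality of $a_X$ for klt pairs with nef $-(K_X+\De)$ (\cite{Cao19}, in the form extended to klt pairs in \cite{PZ19, Wan22, EIM23}) with Brion's equivariant version of the Nishi--Matsumura theorem (\cite[Theorem 2]{Bri10}, \cite[Proposition 1.2]{Bri12}). The argument follows \cite[Theorem 1.1]{Xu20}. The new input is that the surjectivity of $\Aut^0(X)\to \Alb(X)$, which came for free in the Calabi--Yau case, must now be extracted from the local triviality.

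\textbf{Step 1: Albanese quotient of $G$.} Set $G=\Aut^0(X)$. By Chevalley's theorem there is an exact sequence $1\to G_{\rm aff}\to G\to A(G)\to 1$ with $A(G)$ an abelian variety. Brion's theorem gives the following. Any $G$-equivariant morphism from $X$ to an abelian variety induces a homomorphism $G\to \Alb(X)$ that factors through $A(G)$. Moreover, the induced map $\alpha\colon A(G)\to \Alb(X)$ has finite kernel; this is the Nishi--Matsumura statement that $G_{\rm aff}$ is exactly the part of $G$ acting trivially on $\Alb(X)$, up to finite index. Since $a_X$ is canonical, $G$ acts on $\Alb(X)$ by translations compatibly with $a_X$. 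We obtain $a_X(g\cdot x)=\alpha(\bar g)+a_X(x)$.

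\textbf{Step 2: surjectivity of $\alpha$.} This is the main obstacle. By local triviality, $a_X$ is an analytically locally trivial fibration with fiber $F$, so it is smooth and surjective. I would show that the translation vector fields on $\Alb(X)$ lift to $X$, i.e.\ that $a_{X*}\colon H^0(X,T_X)\to H^0(\Alb(X),T_{\Alb(X)})\cong \mathcal O^{\,q}$ is surjective.

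\begin{itemize}
\item The first approach: the local triviality results show that $a_X$ is in fact a flat fibration arising from a representation $\pi_1(\Alb(X))\to \Aut(F)$, so $X=(\widetilde{\Alb(X)}\times F)/\pi_1$. Constant vector fields on $\widetilde{\Alb(X)}=\mathbb C^q$ commute with the deck action, so they descend to $X$ and lift the translations.
\item If only local triviality is available, I would instead use the exact sequence $0\to T_{X/\Alb}\to T_X\to a_X^*T_{\Alb}\to 0$. The obstruction to lifting lies in $H^1(X,T_{X/\Alb})$. I would check that the Kodaira--Spencer class of the isotrivial family vanishes and use the flat-connection description above.
\end{itemize}

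Either way, $\alpha$ is then a surjective isogeny.

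\textbf{Step 3: building the homogeneous model.} Let $F=a_X^{-1}(0)$ and $H=\{g\in G : g(F)=F\}$. Then $H=\ker(\alpha\circ q)$, where $q\colon G\to A(G)$ is the quotient map, so $H$ is a closed normal subgroup containing $G_{\rm aff}$, and $H/G_{\rm aff}\cong\ker\alpha$ is finite. The map $G\times F\to X$, $(g,x)\mapsto gx$, is $H$-invariant for the action $h\cdot(g,x)=(gh^{-1},hx)$, so it descends to $\Phi\colon G\times^H F\to X$. The quotient exists because $G\to G/H$ is an $H$-torsor and $F$ is projective with an $H$-linearized ample bundle (alternatively, use the finite cover $G\times^{G_{\rm aff}}F$). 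The map $\Phi$ lies over the isomorphism $G/H\cong A(G)/\ker\alpha\cong\Alb(X)$ induced by $\alpha$, and it restricts to the identity on fibers over $0$. By $G$-equivariance and transitivity on the base, $\Phi$ is bijective on every fiber. Since $X$ is normal, Zariski's main theorem shows that $\Phi$ is an isomorphism, identifying $a_X$ with $\psi$.
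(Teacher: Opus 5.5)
Your proposal is correct, and its decisive step is the same as the paper's. The paper also uses the locally constant structure of $a_X$ from \cite{Wan22, MW23} to lift constant vector fields (Lemma \ref{lem: lift vector field}). It then passes to groups via $T_{\Aut^0(X),e}\simeq H^0(X,T_X)$, compatibly with the differential of the homomorphism $\Aut^0(X)\to\Alb(X)$, to get surjectivity. You leave this identification implicit when you write ``either way, $\alpha$ is then a surjective isogeny''. It is exactly what turns lifted vector fields into surjectivity of $\alpha$, so you should state it.

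The finish differs. The paper never uses the finite-kernel statement on its own. It applies Brion's structure theorem (Theorem \ref{thm: group action}) to obtain $X\simeq G\times^H Y\to G/H$ with $H/G_{\rm aff}$ finite, and factors this through $a_X$ by the universal property. The inequality $\dim A(G)\ge\dim\Alb(X)$ then forces $\Alb(X)\to G/H$ to be a generically finite fibration of abelian varieties, hence an isomorphism (Lemma \ref{lem: same as Alb}).

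You instead combine surjectivity with Nishi--Matsumura to make $\alpha$ an isogeny. You then define $H$ intrinsically as the stabilizer of $F$, which is the same subgroup as the paper's, namely the kernel of $\Aut^0(X)\to\Alb(X)$. Finally you construct $G\times^H F\to X$ by hand and conclude by Zariski's main theorem. Your version makes $H$ and the isomorphism $G/H\simeq\Alb(X)$ explicit. However, it must pay separately for two things: the existence of $G\times^H F$, via a linearization or the finite-cover trick, which needs $F$ normal (true here, since $X$ is normal and locally a product); and the bijectivity-plus-Zariski argument. The paper's proof is shorter because Brion's theorem already packages all of this.

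Two minor corrections. First, a locally trivial fibration whose fiber $F$ may be singular is flat, not smooth. Second, your second bullet in Step 2 would not work on its own. Vanishing of the Kodaira--Spencer class only kills the part of the obstruction mapping to $H^0(\Alb(X),R^1a_{X*}T_{X/\Alb(X)})$, not the part coming from $H^1(\Alb(X),a_{X*}T_{X/\Alb(X)})$. Mere local triviality is genuinely insufficient: the ruled surface $\Pp_E(\Oo_E\oplus\Ll)$ with $\deg\Ll<0$ is locally trivial over $E$, yet its $\Aut^0$ maps trivially to $E$ (the paper's final example). The flat structure used in your first bullet is what the argument actually needs.
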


Please consult Section \ref{sec: pre} for the precise meaning of the notation. 

Second, for MRCC fibrations, we have the following theorem.

    \begin{theorem}\label{thm: product of MRCC}
        Let $(X, \De)$ be a projective klt pair with the nef log anticanonical divisor $-(K_X+\De)$. Then there exist a normal variety $X'$ and a finite quasi-\'etale cover $X' \to X$ satisfying the following properties:
        \begin{enumerate}
            \item $X'$ admits a locally constant MRCC fibration $\psi: X' \to Y$.
            \item $Y$ admits a locally constant Albanese morphism $a_Y: Y \to \Alb(Y) \simeq \Alb(X')$.
            \item Let $F$ and $S$ be the fibers of $a_Y\circ \psi$ and $a_Y$ over the identity of $\Alb(Y)$ respectively. Then, there exist normal subgroups 
            \[
            \Aut^0(X')_{\rm aff} \subset H \subset \Aut^0(X')
            \] with $H/\Aut^0(X')_{\rm aff}$ a finite group, and a finite subgroup  $K\subset \Aut^0(Y)$ such that $H$ is the preimage of $K$ under the natural group homomorphism $\Aut^0(X') \to \Aut^0(Y)$. Moreover, we have the following commutative diagram
             \[
\begin{tikzcd}
\Aut^0(X') \times^H F \arrow{r}{\simeq} \arrow{d}& X' \arrow{d}{\psi}\\
 \Aut^0(Y) \times^K S  \arrow{r}{\simeq}\arrow{d} & Y \arrow{d}{a_Y}\\
\Aut^0(X')/H \simeq \Aut^0(Y)/K \arrow{r}{\simeq} & \Alb(X') \simeq\Alb(Y),
\end{tikzcd}
\] where the morphisms in the left column are induced from the natural morphisms $\Aut^0(X') \to \Aut^0(Y)$ and $F \to S$. In particular, the MRCC fibration $\psi$ can be identified with the natural $\Aut^0(X')$-equivariant morphism
\[
\Aut^0(X') \times^H F \to \Aut^0(Y) \times^K S.
\] 
Besides, if $X$ is smooth and $\De=0$, then $X'$ can be taken to be $X$.
\end{enumerate}
    \end{theorem}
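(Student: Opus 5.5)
The plan is to combine three inputs: the known structure results for MRCC fibrations of klt pairs with nef anticanonical divisor (\cite{CH19, MW23}), Theorem~\ref{thm: homogeneous fibration of Albanese morphism} applied twice, and Blanchard-type descent of $\Aut^0$ along fibrations with connected fibres.

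\textbf{Parts (1) and (2).} By \cite{CH19, MW23}, after a finite quasi-\'etale cover $X'\to X$ (and with $X'=X$ when $X$ is smooth and $\De=0$), the MRCC fibration $\psi\colon X'\to Y$ is locally constant. Its base $Y$ is klt with numerically trivial canonical class, so $Y$ is a (possibly singular) $K$-trivial variety. The pullback pair $(X',\De')$ is still klt and $-(K_{X'}+\De')$ is nef. For $Y$, local constancy of the Albanese morphism follows from \cite{Kaw85, Amb05}, or from Theorem~\ref{thm: homogeneous fibration of Albanese morphism} applied to $Y$. Since the fibres of $\psi$ are rationally chain connected, every map from $X'$ to an abelian variety factors through $\psi$. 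Hence $\Alb(X')\simeq\Alb(Y)$ and $a_{X'}=a_Y\circ\psi$, so $F$ is also the Albanese fibre of $X'$.

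\textbf{Part (3).}
\begin{enumerate}
\item Since $\psi_*\mathcal O_{X'}=\mathcal O_Y$, Blanchard's lemma gives a homomorphism $\phi\colon\Aut^0(X')\to\Aut^0(Y)$ making $\psi$ equivariant. Equivariance of $a_{X'}$ and $a_Y$ gives a compatible surjection $\Aut^0(Y)\to\Alb(Y)$.
\item Theorem~\ref{thm: homogeneous fibration of Albanese morphism} for $X'$ gives $X'\simeq\Aut^0(X')\times^H F$ with $H$ the stabiliser of $F$ and $H/\Aut^0(X')_{\rm aff}$ finite. The same theorem (or \cite{Xu20}) for $Y$ gives $Y\simeq \Aut^0(Y)\times^K S$, where $K$ is the stabiliser of $S$.
\item Because $Y$ has $K_Y\equiv 0$, $\Aut^0(Y)$ is an abelian variety and $K$ is finite.
\item Since $\psi^{-1}(S)=F$, an element $g\in\Aut^0(X')$ preserves $F$ exactly when $\phi(g)$ preserves $S$. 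Therefore $H=\phi^{-1}(K)$.
\item The isomorphism $\Aut^0(X')/H\simeq\Aut^0(Y)/K$ is then induced by $\phi$, provided $\phi$ is surjective. Both quotients identify with the same Albanese variety, and the maps to it are compatible, so the squares commute. The left vertical map $[g,x]\mapsto[\phi(g),\psi(x)]$ is well defined because $\psi(hx)=\phi(h)\psi(x)$.
\end{enumerate}

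\textbf{Main obstacle: surjectivity of $\phi$.} I would argue as follows. The orbit of a point of $S$ under $\phi(\Aut^0(X'))$ surjects onto $\Alb(Y)$, because $\Aut^0(X')\to\Alb(X')$ is surjective by Theorem~\ref{thm: homogeneous fibration of Albanese morphism}. Hence $\phi(\Aut^0(X'))\cdot K=\Aut^0(Y)$. Since $\phi(\Aut^0(X'))$ is a closed subgroup of finite index in the connected group $\Aut^0(Y)$, it must be all of $\Aut^0(Y)$.

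A subtle point is that $\Aut^0(Y)$ acts on $Y$ faithfully, so it might a priori be larger than the translations lifted from $\Alb$. If $\Aut^0(Y)\to\Alb(Y)$ is an isogeny, the argument above suffices. Otherwise I would compare dimensions using $\dim\Aut^0(Y)=h^0(T_Y)=q(Y)$ for klt $K$-trivial varieties, which should hold via the Beauville–Bogomolov-type decomposition. That identity forces $\dim\Aut^0(Y)=\dim\Alb(Y)$ and closes the gap.
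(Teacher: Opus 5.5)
Your proof is correct. On the one nontrivial point, the surjectivity of $\phi\colon \Aut^0(X')\to\Aut^0(Y)$, it argues in the opposite direction from the paper.

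\textbf{The paper's route.} The paper proves surjectivity infinitesimally. Lemma~\ref{lem: lift vector field}, applied to the locally constant fibrations $\psi$ and $a_Y$, shows that $H^0(X',T_{X'})\to H^0(Y,T_Y)$ and $H^0(Y,T_Y)\to H^0(\Alb(Y),T_{\Alb(Y)})$ are onto. Hence both Blanchard homomorphisms are surjective, and so is their composite $\Aut^0(X')\to\Alb(X')$. Lemma~\ref{lem: same as Alb} is then applied to $Y$ and to $X'$, with $K$ and $H$ defined as the kernels of the maps to $\Alb$. The relation $H=\phi^{-1}(K)$ follows from the factorization $\Aut^0(X')\to\Aut^0(Y)\to\Alb(Y)$.

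\textbf{Your route.} You use Theorem~\ref{thm: homogeneous fibration of Albanese morphism} as a black box for $(X',\De')$ and for $Y$; this is legitimate because klt-ness and nefness of $-(K+\De)$ pass to the quasi-\'etale cover. You then describe $H$ and $K$ as fibre stabilizers, so $H=\phi^{-1}(K)$ follows from $\psi^{-1}(S)=F$. Finally you deduce surjectivity of $\phi$ because $\phi(\Aut^0(X'))\cdot K=\Aut^0(Y)$ with $K$ finite and $\Aut^0(Y)$ connected.

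\textbf{Comparison.} Your argument shows that surjectivity of $\phi$ follows formally from the homogeneity of the two Albanese morphisms, with no vector fields lifted along $\psi$. It costs an extra analytic input: local constancy of $a_{X'}$, via Theorem~\ref{thm: locally constant fibration of Albanese morphism} applied to $(X',\De')$, which is hidden inside your use of Theorem~\ref{thm: homogeneous fibration of Albanese morphism} for $X'$. The paper's route avoids this, since it uses only the local constancy of $\psi$ and $a_Y$ and the purely group-theoretic Lemma~\ref{lem: same as Alb} for $X'$.

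\textbf{Your closing caveat is unnecessary.} In your step (3) you already established that $K$ is finite. Moreover, $K$ is the kernel of the surjection $\Aut^0(Y)\to\Alb(Y)$ supplied by Theorem~\ref{thm: homogeneous fibration of Albanese morphism} for $Y$. So that map is already an isogeny, and the ``otherwise'' case never occurs. No Beauville--Bogomolov-type decomposition is needed; the equality $\dim\Aut^0(Y)=q(Y)$ is itself a consequence of this isogeny.
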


    The above two theorems are not independent of the analytic theorems of \cite{Cao19, CH19, MW23}; our argument builds upon the local constancy of Albanese morphism and MRCC fibrations in this setting. However, it indeed reveals the homogeneous structure of such fibrations, putting the previous analytic results in the algebraic categories. These algebraic formulations are anticipated to play a crucial role in the study of the Morrison-Kawamata cone conjecture and the moduli problem of varieties with nef anticanonical divisors.

    It is desirable to explore the relation between the automorphism group and the Albanese morphism for an arbitrary variety $X$. This is exactly the content of the theorem of Nishi and Matsumura (\cite[Theorem 2]{Mat63}, \cite[Theorem 5.5]{Fuj78}) which shows that there exists a natural group homomorphism 
    \[
    \Aut^0(X) \to \Alb(X)
    \] to the Albanese variety $\Alb(X)$ of $X$ which is the Chevalley decomposition of $\Aut^0(X)$ up to an isogeny (see Section \ref{sec: pre} for the precise meaning). We provide a proof of this theorem in the language of modern algebraic geometry. To be precise, we show the following result.
    
\begin{theorem}\label{thm: Chev for albanese}
    Let $X$ be a projective variety and $a_X: X \to A$ be the Albanese morphism. Let $G \subset \Aut^0(X)$ be a connected algebraic subgroup. Suppose that $X \to B$ is the Stein factorization of $a_X$. By Blanchard's lemma, there exists a natural group homomorphism
    \[
    \theta: G \to \Aut^0(B).
    \] Then 
    \begin{enumerate}
        \item $G \to {\rm Im}(\theta)$ is the Chevalley decomposition of $G$ up to an isogeny;
        \item the claim in (1) is equivalent to \cite[Theorem 2]{Mat63} (i.e., Theorem \ref{thm: Mat63}).
    \end{enumerate}
\end{theorem}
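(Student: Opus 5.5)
The plan is to derive both parts from Blanchard's lemma together with Brion's equivariant fibration theorem (\cite[Theorem 2]{Bri10}, \cite[Proposition 1.2]{Bri12}). Here ``Chevalley decomposition up to an isogeny'' means two things: ${\rm Im}(\theta)$ is an abelian variety, and $G_{\rm aff}\subset \ker\theta$ with $\ker\theta/G_{\rm aff}$ finite. Equivalently, $(\ker\theta)^0$ is affine and contains $G_{\rm aff}$.

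\emph{Step 1: ${\rm Im}(\theta)$ is an abelian variety.} Write $X\to B\xrightarrow{\nu} A$ for the Stein factorization, with $\nu$ finite. Blanchard's lemma applied to $X\to B$ and to $X\to A$ makes both morphisms $G$-equivariant. Hence $\theta$ composed with the induced map $\Aut^0(B)\to\Aut^0(A)$ is the Blanchard homomorphism $G\to \Aut^0(A)$. Since $\Aut^0(A)\simeq A$ acts by translations, this composite is a homomorphism $\alpha\colon G\to A$.

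I claim $\Aut^0(B)\to \Aut^0(A)$ is injective on connected subgroups. Indeed, if a connected group acts on $B$ and trivially on $A$, then each orbit lies in a finite fiber of $\nu$, so every orbit is a point and the action is trivial. Applied to $(\ker\alpha)^0$, this gives $\theta(\ker\alpha)^0=1$. Therefore $\theta$ and $\alpha$ differ by a finite kernel, and ${\rm Im}(\theta)$ is isogenous to $\alpha(G)$, a connected algebraic subgroup of $A$, hence an abelian variety. Since an affine connected group admits no nontrivial homomorphism to an abelian variety, $G_{\rm aff}\subset\ker\theta$.

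\emph{Step 2: $N:=(\ker\theta)^0$ is affine.} This is the main step. First I replace $X$ by its normalization $\tilde X$. The action of $G$ lifts to $\tilde X$, and $a_{\tilde X}$ composed with the induced map $\Alb(\tilde X)\to \Alb(X)$ equals $a_X$ composed with the normalization. This is enough to run the argument below, with minor bookkeeping.

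Apply Brion's theorem to the faithful action of $N$ on the normal projective variety $\tilde X$. It gives an $N$-equivariant morphism $\psi\colon \tilde X\to N/H$ with $N_{\rm aff}\subset H$ and $H/N_{\rm aff}$ finite, so $N/H$ is an abelian variety isogenous to $A(N)$. By the universal property of the Albanese, $\psi$ factors as $\psi=\phi\circ a_{\tilde X}$ up to translation.

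Now $N$ acts trivially on $B$, hence on $A$. I will argue that $N$ also acts trivially on $\Alb(\tilde X)$. The $N$-action on $\Alb(\tilde X)$ is by translations, via a homomorphism $N\to \Alb(\tilde X)$, and the map $\Alb(\tilde X)\to\Alb(X)$ has kernel that is at most finite on the relevant image. So the image of $N$ in $\Alb(\tilde X)$ is a connected subgroup of a finite group, hence trivial.

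Then for $n\in N$ and $x\in\tilde X$,
\[
n\cdot\psi(x)=\psi(nx)=\phi(a_{\tilde X}(nx))=\phi(a_{\tilde X}(x))=\psi(x).
\]
Since $\psi$ is surjective onto the homogeneous space $N/H$, the group $N$ acts trivially on $N/H$, so $N=H$. Thus $A(N)=0$ and $N$ is affine. Combined with Step 1 this proves (1). The subtle point I expect to check carefully is exactly this compatibility of Albanese varieties under normalization.

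\emph{Step 3: part (2).} \cite[Theorem 2]{Mat63} asserts that the homomorphism $g\mapsto a_X(gx)-a_X(x)$ from $G$ to $A$ has kernel whose identity component is $G_{\rm aff}$, up to finite index. This homomorphism is exactly $\alpha$ from Step 1. Step 1 shows that $\alpha$ and $\theta$ have the same kernel up to finite groups, with isogenous images. Hence the statement ``$\theta$ is the Chevalley decomposition up to isogeny'' holds if and only if the same holds for $\alpha$, which is Theorem \ref{thm: Mat63}.
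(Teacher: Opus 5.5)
\textbf{Gap: the reduction to the normalization in Step 2.} Your Steps 1 and 3 are sound, and your Step 2 argument is correct when $X$ is normal. The problem is passing to the normalization. To run Step 2 on $\tilde X$, you need the image of $N$ under $n\mapsto a_{\tilde X}(n\tilde x)-a_{\tilde X}(\tilde x)$ to be trivial in $\Alb(\tilde X)$. You only know that this image dies in $\Alb(X)$.

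Your assertion that $\Alb(\tilde X)\to\Alb(X)$ has finite kernel ``on the relevant image'' is not justified, and in general this kernel can be positive-dimensional. For example, glue two points $p,q$ of an elliptic curve $E$ with $p-q$ non-torsion. A morphism from the resulting projective nodal curve $X$ to an abelian variety pulls back to $h+c$ on $E$, with $h$ a homomorphism and $h(p-q)=0$. Since $\mathbb{Z}(p-q)$ is Zariski dense in $E$, this forces $h=0$. So $\Alb(X)=0$ while $\Alb(\tilde X)=E$.

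Worse, given the rest of your Step 2, triviality of the image of $N$ in $\Alb(\tilde X)$ is equivalent to $N$ being affine. So this step assumes what it is meant to prove. As written, the proposal establishes the theorem only for normal $X$.

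\textbf{The normal case, and comparison with the paper.} Normal $X$ is also the only case the paper's proof covers: it applies Brion's theorem, which is stated for normal projective varieties, directly to $X$. The remedy is to assume $X$ normal and drop the normalization. Then your argument is complete, and it takes a different route from the paper's.

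The paper proceeds as follows:
\begin{enumerate}
\item It applies Brion's theorem to $G$ itself and factors $\pi\colon X\to G/H$ through $B$ via the Stein factorization.
\item It uses the composition rule for Blanchard homomorphisms to get $\ker\theta\subset H$.
\item It gets $G_{\rm aff}\subset\ker\theta$ because $\Aut^0(B)$ is an abelian variety, as $B$ is not uniruled.
\item The comparison $\xi\circ\tilde\theta=\nu$ with Matsumura's homomorphism, together with finiteness of $\ker\xi$, is used only for part (2). That finiteness is proved via finite stabilizers, again from Brion's theorem applied to $B$.
\end{enumerate}

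You do the comparison first. Your observation is that a connected group acting on $B$ trivially over $A$ has orbits inside finite fibers, hence acts trivially. This is a shorter proof of the paper's finiteness claim for $\ker\xi$. You then apply Brion's theorem only to $N=(\ker\theta)^0$ and force $N=H$ because $N$ acts trivially on $A$. This proves Matsumura's finite-kernel statement directly and deduces (1) from it. It needs neither the composition lemma nor the non-uniruledness of $B$.

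A minor point: $a_X$ is not a fibration, so Blanchard's lemma does not apply to it. The translation actions of $G$ and of $\Aut^0(B)$ on $A$ come from the universal property of the Albanese, as in the paper's construction of $\varphi$.
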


The above result is slightly different from Matsumura's original theorem in its formulation. In \cite[Theorem 2]{Mat63}, the group homomorphism involved is derived from the universal property of Albanese morphisms and the rigidity of morphisms between abelian varieties. However, then group homomorphism in Theorem \ref{thm: Chev for albanese} is obtained from Blanchard's lemma (Lemma \ref{lem: Blanchard}). We show that the two formulations are indeed equivalent. See Remark \ref{rmk: Mat63} for the relationship among Theorem \ref{thm: Chev for albanese}, Brion's version (\cite[Theorem 2]{Bri10}, \cite[Proposition 1.2]{Bri12}) and the above theorem.

Next, it is natural to proceed to study the relationship between automorphism groups and MRCC fibrations. A similar result as the theorem of Nishi and Matsumura still holds.

\begin{theorem}\label{thm: aut in MRCC fibration}
Let $X \dto Z$ be an MRCC fibration. Then, after a birational modification of $Z$, there exists a natural homomorphism of algebraic groups
\[
\theta: \Aut^0(X) \to \Aut^0(Z).
\] 
Moreover, if $G \subset \Aut^0(X)$ is a connected subgroup, then $G \to \theta(G)$ is the Chevalley decomposition of $G$ up to an isogeny. 
\end{theorem}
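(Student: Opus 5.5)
Here $X$ is a projective variety, which we may take normal after normalizing, since $\Aut^0$ lifts to the normalization. The plan is to make the MRCC fibration $\Aut^0(X)$-equivariant and then mimic the proof of Theorem \ref{thm: Chev for albanese}, with $\Alb$ replaced by the non-uniruled base $Z$.

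\textbf{Step 1: constructing $\theta$.} Since $\Aut^0(X)$ is connected, each $g\in\Aut^0(X)$ maps rational curves to rational curves and preserves the MRCC equivalence relation. By uniqueness of the MRCC fibration up to birational equivalence, $G$ therefore acts birationally on $Z$. To regularize this action, I use the Chow/Hilbert model of the MRCC fibration. Take $Z$ to be the normalization of the closure, in the Chow variety of $X$, of the locus of general fibers (the rationally chain-connected classes). The natural action of $\Aut^0(X)$ on $\mathrm{Chow}(X)$ is regular and preserves this closure, so it lifts to the normalization. We obtain a regular action and hence a homomorphism $\theta:\Aut^0(X)\to\Aut^0(Z)$, with image connected. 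The graph $\Gamma\subset X\times Z$ is $G$-stable, and $\Gamma\to Z$ is $G$-equivariant.

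\textbf{Step 2: $\theta(G)$ is an abelian variety.} By Graber--Harris--Starr, $Z$ is not uniruled. I claim that any connected algebraic group acting faithfully on a normal projective non-uniruled variety is an abelian variety. Indeed, if the affine part of the Chevalley decomposition were nontrivial, it would contain a copy of $\mathbb G_a$ or $\mathbb G_m$. Closures of its orbits would then be rational curves through general points, making $Z$ uniruled. Applying this to $\theta(G)$ shows $\theta(G)$ is an abelian variety. Consequently $\theta(G_{\rm aff})$, being affine and contained in an abelian variety, is trivial, so $G_{\rm aff}\subset \ker\theta$.

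\textbf{Step 3: $(\ker\theta)^0$ is affine.} This is the main obstacle. It suffices to show that the abelian variety $A'=(\ker\theta)^0/((\ker\theta)^0)_{\rm aff}$ is trivial, since then $\ker\theta$ is affine up to finite index and $G/G_{\rm aff}\to\theta(G)$ is an isogeny. The group $(\ker\theta)^0$ preserves every fiber of $\Gamma\to Z$. In particular it acts on a general fiber $F$, which is a rationally chain-connected projective variety, and it acts faithfully because it acts faithfully on $X$ and general fibers are dense.

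I would then use Brion's result in the form of Theorem \ref{thm: Chev for albanese}, applied to $(\ker\theta)^0$ acting on $X$, in a fiberwise way. Here are the two steps I would try:
\begin{enumerate}
\item Using the fibration $\Gamma\to Z$ and Theorem \ref{thm: Chev for albanese}, show that an abelian quotient of $(\ker\theta)^0$ produces a nonconstant equivariant map $F\to A'/(\text{finite})$, via Brion's theorem that $X$ admits an equivariant morphism to $A'/\text{finite}$ for the $A'$-action on $X$.
\item Argue that since $F$ is rationally chain-connected, every morphism $F\to$ abelian variety is constant (rational curves map to points). Therefore $A'$ acts on $F$ with orbits that are points of the abelian quotient, forcing $A'$ to be trivial.
\end{enumerate}
Concretely, the equivariant map $X\to A'/\Phi$ restricted to a fiber $F$ is constant, yet $A'$ preserves $F$ and acts on the target by translation, which is nontrivial unless $A'=0$. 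This is a contradiction. The delicate point is making this work on a singular, non-normal general fiber, and on $\Gamma$ rather than $X$; I expect to handle it by passing to the normalization and a $G$-equivariant resolution.
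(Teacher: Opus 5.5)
Your proof is correct, and it differs from the paper's in Steps 1 and 3. Step 2 is the same argument: non-uniruledness of $Z$ forces $\theta(G)$ to be abelian, so $G_{\rm aff}\subset\Ker\theta$.

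For $\theta$, the paper does not use a Chow model. Theorem \ref{theorem: rational action} shows directly that $\Aut^0(X)$ acts rationally on an open $V\subset Z$: each $g\cdot X_t$ meeting $X_V$ lies over a single point. Weil's regularization (Theorem \ref{thm: Weil's theorem}) then gives a smooth projective model $T$ with a regular action. This $T$ is explicitly compatible with Blanchard's lemma where $f$ is a morphism, but it is non-canonical. Your Chow/Hilbert model is the canonical alternative the paper mentions in Remark \ref{rmk: canonical choice of Z}. Regularity of the action comes for free from functoriality. The only thing to check is that each $g$ preserves the closure of the fiber locus, which you correctly reduce to general fibers being full rational chain-connectedness classes.

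For the kernel, the paper asserts that $(\Ker\theta)^0$ acts faithfully on a general fiber (Lemma \ref{lem: faithful on general fiber}). It then applies Brion's theorem on that normal fiber (Lemma \ref{lem: action on RCC variety}). Your final ``Concretely'' argument instead applies Theorem \ref{thm: group action}(2) to $(\Ker\theta)^0$ acting on the normal variety $X$ itself. (That is the result you want, not Theorem \ref{thm: Chev for albanese}.) The argument runs as follows. A general fiber $F$ is $(\Ker\theta)^0$-stable. It maps to a point of the target because it is connected by chains of rational curves. That point is fixed under translation by the surjection $(\Ker\theta)^0\to A'/\Phi$, so $A'=0$. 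This uses only stability of the fibers, so the ``delicate point'' you raise about singular, non-normal fibers and about $\Gamma$ does not arise.

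You should delete the sentence claiming that $(\Ker\theta)^0$ acts faithfully on a general fiber ``because it acts faithfully on $X$ and general fibers are dense''; it is false. Take $X=\Pp_C(\Oo_C\oplus L)$ with $C$ a curve of genus $\geq 1$ and $h^0(L)\geq 2$.
\begin{itemize}
\item The group $H^0(C,L)\cong{\mathbb G}_a^{h^0(L)}$ acts faithfully on $X$ via $(u,v)\mapsto(u,v+su)$.
\item It lies in $(\Ker\theta)^0$ for the MRCC fibration $X\to C$.
\item It acts on $X_t$ through $s\mapsto s(t)$, with kernel $H^0(L(-t))\neq 0$ for every $t$.
\end{itemize}
The same example contradicts Lemma \ref{lem: faithful on general fiber} as stated. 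The homomorphism $K\times U\to\Aut(X_U/U)$ of group schemes over $U$ need not be a monomorphism, which is where the paper's identification of $K$ with a subgroup of $\Aut(X_V/V)$ breaks. So your route through Brion's theorem on $X$ is not merely an alternative: it is what actually makes this step go through.
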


Note that an MRCC fibration $f: X \dto Z$ is an almost holomorphic projective map which means that $f$ is just a projective morphism restricting to an open set of $X$. Moreover, $f$ (and thus $Z$) is only well-defined up to birational equivalence. Thus, there is no natural group homomorphism $\Aut^0(X) \to \Aut^0(Z)$ as obtained by the classical Blanchard's lemma. To overcome this difficulty, we generalize Blanchard's lemma for almost holomorphic maps.

\begin{lemma}[Blanchard's lemma for almost holomorphic maps]\label{lem: Blanchard for almost hol maps}
    Let $f: X \dto Z$ be an almost holomorphic projective fibration between normal projective varieties. Then, there exist a variety $T$ that is birational to $Z$ and a natural homomorphism of algebraic groups
    \[
    \theta: \Aut^0(X) \to \Aut^0(T)
    \] which is compatible with Blanchard's lemma over the locus where $f$ is projective.
\end{lemma}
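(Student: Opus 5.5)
The idea is to replace $Z$ by a component of the Chow variety (or Hilbert scheme) of $X$ that parametrizes the general fibers of $f$, and to let $\Aut^0(X)$ act on that component by translating cycles. Since $f$ is almost holomorphic, there is a dense open set $Z^0\subset Z$ over which $f$ is a projective morphism $f^{-1}(Z^0)\to Z^0$ with fibers proper subvarieties of $X$. The fibers $X_z$ for $z\in Z^0$ are irreducible closed subvarieties of $X$ (after shrinking $Z^0$ we may also assume they are flat). We therefore obtain a rational map
\[
\iota: Z \dashrightarrow \mathrm{Chow}(X), \qquad z\mapsto [X_z].
\]
It is injective on $Z^0$, because distinct fibers are disjoint. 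Let $T_1$ be the closure of its image inside a component of $\mathrm{Chow}(X)$, with $T$ the normalization of $T_1$ if needed. Then $T$ is birational to $Z$: the map is generically injective, and in characteristic zero we can use the universal family to see that it is birational onto its image.

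Next, $\Aut(X)$, and hence the connected group $\Aut^0(X)$, acts algebraically on $\mathrm{Chow}(X)$ by $g\cdot[C]=[g(C)]$. Being connected, $\Aut^0(X)$ preserves each irreducible component. The key claim is that $\Aut^0(X)$ preserves $T_1$ itself, not merely its component. For $g\in\Aut^0(X)$ and a general fiber $X_z$, apply classical Blanchard's lemma (Lemma \ref{lem: Blanchard}) to the proper morphism $f^{-1}(Z^0)\to Z^0$. One cannot do this directly, because $f^{-1}(Z^0)$ is not $\Aut^0(X)$-stable. Instead I would argue as follows. The group $\Aut^0(X)$ is connected, and $g(X_z)$ moves in a connected family containing $X_z$. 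Use the characterization of MRCC-type or almost holomorphic fibers: through a general point $x$ there is a unique member of the family $T_1$ containing $x$. The orbit map $\Aut^0(X)\times T_1\to \mathrm{Chow}(X)$ has irreducible image containing $T_1$, consisting of cycles of the same class.

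To get equality, I would use the graph. Let $U\subset T_1\times X$ be the universal family, so $U\to X$ is birational, since a general point lies on exactly one fiber. Then $\Aut^0(X)\times U\to X$ is a family of cycles whose members through a general point are the deformations $g(X_z)$. A cycle of the form $g(X_z)$ meeting a general fiber $X_w$ is a limit of cycles in $T_1$ by connectedness. Since general fibers of $f$ are pairwise disjoint and cover $X$, rigidity forces $g(X_z)$ to equal some $X_w$. This is the step I expect to be the main obstacle.

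To make that step rigorous I would first try the following. Fix a general $z$ and consider $\Aut^0(X)\to \mathrm{Chow}(X)$, $g\mapsto [g(X_z)]$. Restricted to the dense open set of $g$ with $g(X_z)\subset f^{-1}(Z^0)$, the composite $f\circ g|_{X_z}$ is a morphism from $X_z$ to $Z^0$. One then applies rigidity in the form used in Blanchard's lemma: $X_z$ is a general fiber and $f\circ g$ contracts a nearby deformation of $X_z$ at $g=e$. For this we use properness of $X_z$, and the fact that the family $\{f(g(X_z))\}$ is a family of proper connected subvarieties of the quasi-projective $Z^0$ that specializes to a point. Hence these images are points for $g$ in an open neighborhood of $e$, and by irreducibility of $\Aut^0(X)$ this holds on a dense open set. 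Taking closure in $\mathrm{Chow}(X)$ gives $g\cdot T_1\subset T_1$ for all $g$, with equality because $g^{-1}$ behaves the same way.

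Finally, this gives an algebraic action $\Aut^0(X)\times T\to T$, which lifts to the normalization by its universal property. Equivalently, it gives a homomorphism of group functors $\theta:\Aut^0(X)\to\Aut(T)$. Its image is connected, so it lands in $\Aut^0(T)$. It is a homomorphism of algebraic groups because the action morphism is algebraic, and $\Aut(T)$ is a group scheme locally of finite type, since $T$ is projective: it is the closure in $\mathrm{Chow}(X)$, and projective after normalization. Compatibility with Blanchard's lemma is then immediate from the construction. On the open set $Z^0\simeq T^0$, whenever $g$ maps $X_z$ into $f^{-1}(Z^0)$, $\theta(g)$ sends $[X_z]$ to $[g(X_z)]=[X_{w}]$. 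This is exactly the induced map $z\mapsto w$ given by Blanchard's lemma. Naturality, in the sense of independence of the choice of $Z$ in its birational class, also follows, since the family of general fibers, and hence $T$, depends only on $f$ up to birational equivalence.
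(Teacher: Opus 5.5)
Your proposal is correct, but it reaches the regular action by a different route from the paper.

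\textbf{The paper's route.} It first proves Theorem \ref{theorem: rational action}. Applying the rigidity lemma along a curve $\gamma\subset\Aut^0(X)$ through $e$ and $g$, after resolving $\gamma\times X_t\dto Z$ and Stein-factorizing, it shows that $g$ sends a general fibre onto a fibre. It then checks by hand (flatness of $X_V\to V$, openness of the domain, pulling back regular functions) that this defines a \emph{rational} action of $\Aut^0(X)$ on an open set $V\subset Z$. The model $T$ is then supplied by Weil's regularization theorem (Theorem \ref{thm: Weil's theorem}), so it is smooth projective but non-canonical.

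\textbf{Your route.} You put the general fibres into $\mathrm{Chow}(X)$ or $\mathrm{Hilb}(X)$, where $\Aut^0(X)$ already acts regularly. You use rigidity, on the trivial family $\Omega_z\times X_z\to\Omega_z$ with $\Omega_z=\{g : g(X_z)\subset f^{-1}(Z^0)\}$, only to show that the closure $T_1$ of the family of general fibres is invariant. This is essentially the Chow-reduction alternative the authors mention in Remark \ref{rmk: canonical choice of Z}. It gives a canonical projective $T$ and makes both Step 3 of the paper's argument and Weil's theorem unnecessary. The cost is that $T$ is only normal rather than smooth. This is harmless for the statement, and non-uniruledness, which is what Theorem \ref{thm: aut in MRCC fibration} later uses, is a birational property. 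The paper's version, in return, gives an explicit rational action on an open subset of the original $Z$, which is how its compatibility statement is phrased.

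\textbf{Two steps to write out.}
\begin{enumerate}
\item Rigidity only gives that $f(g(X_z))$ is a point for $g$ in some neighbourhood of $e$ in $\Omega_z$. The set of $g\in\Omega_z$ with this property is also closed. Its complement is the image of the open set $\{(g,x,y) : f(gx)\neq f(gy)\}$ under the flat, hence open, projection $\Omega_z\times X_z\times X_z\to\Omega_z$. Since $\Omega_z$ is irreducible, the property holds for every $g\in\Omega_z$, which is exactly what your compatibility claim needs.
\item Deduce $g\cdot T_1\subset T_1$ for all $g$ from the fact that $\{(g,t) : g\cdot t\in T_1\}$ is closed in $\Aut^0(X)\times T_1$. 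It contains the set $\{(g,[X_z]) : z\in Z^0,\ g\in\Omega_z\}$, which is dense because each $\Omega_z$ is dense in $\Aut^0(X)$.
\end{enumerate}
With these two points in place, your heuristic paragraph about limits of cycles and the universal family can be dropped.
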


We discuss the contents of the paper. Section \ref{sec: pre} gives necessary background materials and fixes notation. In section \ref{sec: fibrations on varieties with nef anticanonical divisors}, we establish the homogeneous structures for Albanese morphisms and MRCC fibrations for varieties with nef log anticanonical divisors (i.e., Theorem \ref{thm: homogeneous fibration of Albanese morphism} and Theorem \ref{thm: product of MRCC}). Section \ref{sec: Albanese} is denoted to the proof of Theorem \ref{thm: Chev for albanese} (i.e., Theorem \ref{thm: Mat63}). Section \ref{sec: MRCC} is denoted to the proof of Theorem \ref{thm: aut in MRCC fibration} which is an analogous of Theorem \ref{thm: Mat63} for MRCC fibrations.

\medskip

\noindent {\bf Acknowledgments.} We benefit from discussions with Bo Yang and Juanyong Wang. We thank Sheng Meng for pointing out the possibilities of using the canonical constructions of MRCC fibrations (see Remark \ref{rmk: canonical choice of Z}). Zhan Li is partially supported by the NSFC No.12471041, the Guangdong Basic and Applied Basic Research Foundation No.2024A1515012341, and a grant from SUSTech. Jinsong Xu is partially supported by XJTLU-IRA No.00100000098.

\section{Preliminaries}\label{sec: pre}

We introduce the necessary background materials. Along with this process, we fix the notation and terminologies. 

A variety means an integral separated scheme of finite type over $\Cc$. A point of a variety is understood to be a closed point unless explicitly stated otherwise. A projective morphism $f: X \to S$ between normal varieties is called a fibration if it is surjective and $f_*\Oo_X=\Oo_S$. If $S' \to S$ is a morphism, then $X_{S'}$ denote the fiber product $X \times_{S'}S$. Suppose that $\De \geq 0$ is a $\Qq$-divisor on a normal variety $X$, then $(X,\De)$ is called a log pair. A log pair $(X,\De)$ has klt singularities if $K_X+\De$ is $\Qq$-Cartier where $K_X$ is the canonical divisor of $X$, and there exists a log resolution $\pi: Y \to X$ such that in the expression
\begin{equation}\label{eq: klt}
    K_Y=\pi^*(K_X+\De)+D,
\end{equation}
the coefficients of $D$ are greater than $-1$. Note that in \eqref{eq: klt}, $K_Y$ is chosen to be the unique Weil divisor on $Y$ such that $\pi_*K_Y=K_X$. Similarly, if the coefficients of $D$ are greater or equal to $-1$, then $(X,\De)$ is said to have lc singularities. See \cite[\S 2.3]{KM98} for more detailed discussions. 

For an algebraic variety $X$, we have the Albanese morphism $a_X: X \to \Alb(X)$ which is a morphism to an abelian variety satisfying the following universal property: for any morphism to an abelian variety $h: X \to B$, there exists a unique morphism $g: \Alb(X) \to B$ such that $h=g \circ a_X$. Similarly, we have the Albanese map $a^r_X: X \dto \Alb_r(X)$ which is a rational map to an abelian variety satisfying the analogous universal property for rational maps to abelian varieties. See \cite{Ser58} for details. The following proposition should be well-known.

\begin{proposition}\label{prop: albanese}
Let $X$ be a projective variety and $W\to X$ be a resolution.
\begin{enumerate}
\item If $X$ has rational singularities, then the natural morphism $\Alb(W) \to \Alb(X)$ is an isomorphism.
\item The Albanese map $a^r_X: X \dto \Alb_r(X)$ is exactly the natural map
\[
X \dto W \to \Alb(W).
\]
\end{enumerate}
\end{proposition}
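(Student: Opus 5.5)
For (1), let $\pi: W \to X$ be the resolution. The composition $W \to X \to \Alb(X)$ gives, by the universal property of $\Alb(W)$, a morphism $\pi_*: \Alb(W) \to \Alb(X)$. To show it is an isomorphism, I will build an inverse: I will show that $a_W: W \to \Alb(W)$ factors through $\pi$. The key input is that $X$ has rational singularities, so $\pi_*\Oo_W=\Oo_X$ and the fibers of $\pi$ are rationally chain connected. More precisely, by Hacon--McKernan the fibers of a resolution of a klt, or more generally rational, singularity are rationally chain connected. Any morphism from a rationally chain connected variety to an abelian variety is constant, because abelian varieties contain no rational curves. Hence $a_W$ is constant on the fibers of $\pi$. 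Since $\pi$ is proper with connected fibers and $\pi_*\Oo_W=\Oo_X$, rigidity gives that $a_W$ factors as $W \to X \xrightarrow{h} \Alb(W)$.

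\emph{Alternative for (1).} If I want to avoid rational chain connectedness, I will use Kollár's argument. Both Albanese varieties are dual to Picard varieties, so it suffices to see that $\pi^*: \Pic^0(X) \to \Pic^0(W)$ is an isomorphism. Rationality of the singularities gives $R^1\pi_*\Oo_W=0$, so $H^1(X,\Oo_X)\cong H^1(W,\Oo_W)$, and the Leray spectral sequence shows injectivity on Picard schemes. Comparing dimensions and connectedness then yields the isomorphism. I expect this technical point to be the main obstacle, and I would cite the appropriate reference for the factorization of $a_W$ through $X$.

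With the factorization $h: X \to \Alb(W)$ in hand, the universal property of $\Alb(X)$ gives $g: \Alb(X) \to \Alb(W)$ with $h=g\circ a_X$. Translations are normalized by choosing base points compatibly. Uniqueness in the two universal properties then shows that $g\circ \pi_*$ and $\pi_*\circ g$ are the identities.

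For (2), let $\phi: X \dashrightarrow W \to \Alb(W)$ be the natural map, where $X \dashrightarrow W$ is $\pi^{-1}$. Now let $f: X \dashrightarrow B$ be any rational map to an abelian variety. Then $f\circ\pi: W \dashrightarrow B$ is a rational map from a smooth variety to an abelian variety, and it extends to a morphism because abelian varieties contain no rational curves (Weil's extension theorem, cf. \cite[Theorem 4.9.4]{...}). By the universal property of $\Alb(W)$, $f\circ \pi$ factors uniquely through $a_W$. Hence $f=g\circ\phi$ as rational maps. Uniqueness follows because $\phi$ agrees with $a_W\circ\pi^{-1}$ on a dense open set and $W \to \Alb(W)$ is universal. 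Therefore $(\Alb(W),\phi)$ satisfies the universal property defining $a_X^r$, and the two coincide up to unique isomorphism.
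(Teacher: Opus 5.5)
\textbf{Gap in your main route for (1).} That route assumes the fibres of a resolution of a \emph{rational} singularity are rationally chain connected. Hacon--McKernan \cite{HM07} prove this for klt singularities only. It is false for rational singularities in general; these need not be klt, and need not even have $\Qq$-Cartier canonical divisor.

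For a counterexample, let $V$ be an Enriques surface and $L$ a sufficiently ample line bundle. The vertex of the projective cone over $(V,L)$ is a rational singularity: $H^i(V,\Oo_V)=0$ for $i\ge 1$, and $H^i(V,L^m)=0$ for $i,m\ge 1$ by Kodaira vanishing since $K_V\equiv 0$. Yet blowing up the vertex gives a resolution whose fibre over the vertex is $V$ itself, which is not uniruled. So the step ``$a_W$ is constant on the fibres of $\pi$'' is not justified by your argument under the hypothesis of the statement. It would be justified if $X$ were klt, which is the case in the paper's later applications.

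\textbf{Your alternative for (1) is the paper's proof and suffices on its own.} Note how each step follows:
\begin{enumerate}
\item $\pi_*\Oo_W=\Oo_X$ gives injectivity of $\pi^*$ on Picard groups, by the projection formula.
\item $R^1\pi_*\Oo_W=0$ gives $H^1(X,\Oo_X)\cong H^1(W,\Oo_W)$, i.e.\ an isomorphism on tangent spaces.
\item Picard schemes are smooth in characteristic $0$, so equal dimensions and connectedness of $\Pic^0(W)$ yield $\Pic^0(X)\cong\Pic^0(W)$, hence $\Alb(W)\cong\Alb(X)$ by duality.
\end{enumerate}
No factorization of $a_W$ through $X$ is needed as an input, and no further reference for it. On the contrary, the factorization $a_W=\pi_*^{-1}\circ a_X\circ\pi$ is a consequence. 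Present the Picard argument as the proof of (1), and either drop the rational-chain-connectedness route or restrict it to the klt case.

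\textbf{Part (2) is correct and takes a slightly different route.} You extend $f\circ\pi$ to a morphism on the given smooth $W$ by Weil's extension theorem, so (2) becomes independent of (1). The paper instead makes $X\dto B$ a morphism on some resolution. It then applies (1) to the smooth variety $W$ to see that passing to a further resolution $W'\to W$ does not change the Albanese variety. Your version uses the extension theorem as an external input; the paper's reuses (1) in its place. Both give the universal property.
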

\begin{proof}
The Albanese variety $\Alb(X)$ is the dual abelian variety of $\Pic^0(X)$. As $X$ has rational singularities, the natural morphism $\Pic^0(X) \to \Pic^0(W)$ is an isomorphism. This implies that the natural morphism $\Alb(W) \to \Alb(X)$ is an isomorphism. This shows (1).

For any map $X \dto B$ from $X$ to an abelian variety, after taking a resolution $W \to X$, we can assume that $W \to B$ is a morphism. Hence, there exists a natural morphism $\Alb(W) \to B$ by the universal property of $a_W$. As $W$ has rational singularities, by (1), we know that for any resolution $W' \to W$, the natural morphism $\Alb(W') \to \Alb(W)$ is an isomorphism. Therefore, $X \dto W \to \Alb(W)$ satisfies the universal property of the Albanese map. This shows (2).
\end{proof}

Let $f: X \dto Z$ be a rational map. Then $f$ is called almost holomorphic if there exists an open set $\emptyset \neq U \subset Z$ such that $f^{-1}(U) \to U$ is a morphism. Here, $f^{-1}(U)$ is defined as $p(q^{-1}(U))$ for some morphisms $p: W \to X$ and $q: W \to Z$ where $p$ is birational and $q=f\circ p$. An almost holomorphic map is a projective fibration if $f^{-1}(U) \to U$ is a projective fibration. Let $X$ be a normal projective variety, then there exist a normal variety $Z$ and an almost holomorphic projective fibration $f: X \dto Z$ such that fibers of the restricting projective fibration $f_U: f^{-1}(U) \to U$ are rationally chain-connected. Such an $f$ is called the maximal rationally chain-connected fibration (MRCC fibration) if it satisfies the universal property that for any almost holomorphic projective fibration $h: X \dto T$ with the aforementioned property, there exists a unique map $g: T \dto Z$ such that $f= g\circ h$. See \cite[\S 5.5]{Deb01} for the existence and properties of MRCC fibrations. We remark that when $X$ is a smooth projective variety over $\Cc$, then $X$ is rationally connected if and only if it is rationally chain-connected (see \cite[Corollary 4.28]{Deb01}). Let $\tilde{X} \to X$ be a resolution of $X$. If $X$ has klt singularities, then $X$ is rationally chain-connected if and only if $\tilde{X}$ is rationally chain-connected (see \cite[Corollary 1.6]{HM07}). Moreover, rational connectedness is preserved under birational equivalence for proper varieties \cite[(IV. (3.3.3))]{Kol96}.

 If \( X \) is a variety equipped with a regular action of an algebraic group \( G \), then \( X \) is called a \( G \)-variety. Let \( G \) be a connected algebraic group. We define \( A(G) \coloneqq \Alb(G) \) as the Albanese variety of \( G \). By a theorem of Chevalley, $G$ sits in an exact sequence of connected algebraic groups
\[
0 \to G_{\rm aff} \to G \to A(G) \to 0,
\] where $G_{\rm aff}$ is the maximal connected affine algebraic group. See \cite[\S 2]{BSU13} for details. We say an exact sequence of algebraic groups
\[
0 \to K \to G \to Q \to 0
\] (or just $G \to Q \to 0 $) is a Chevalley decomposition of $G$ up to an isogeny if $A(G)$ and $Q$ are isogeny abelian varieties. By the universal property of the Albanese morphism, this is equivalent to that $K/G_{\rm aff}$ is a finite group.

For an algebraic group $G$, we use $e$ to denote its identity element and $G^0$ to denote its identity component. For example, if $\Aut(X)$ is the algebraic group that represents the automorphism functor of $X$, then $\Aut^0(X)$ is its identity component.

We will use the Blanchard's lemma repetitively. The following version is taken from \cite[Proposition 4.2.1, Corollary 4.2.6 ]{BSU13}. See \cite[\S 4]{BSU13} for details.

\begin{lemma}[Blanchard's Lemma]\label{lem: Blanchard}
Let $f: X \to Y$ be a projective fibration. Let $G$ be a connected group scheme acting on $X$. Then there exists a unique $G$-action on $Y$ such that $f$ is $G$-equivariant. When $G = \Aut^0(X)$, then this action naturally induces a homomorphism of group schemes
\[
b_f: \Aut^0(X) \to \Aut^0(Y).
\]
\end{lemma}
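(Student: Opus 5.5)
The plan is to construct the action on $Y$ by descending the composite $\varphi\colon G\times X\to Y$, $(g,x)\mapsto f(g\cdot x)$, along the morphism $p=\mathrm{id}_G\times f\colon G\times X\to G\times Y$. Uniqueness is immediate: $p$ is surjective and an epimorphism of schemes, because $p_*\mathcal O_{G\times X}=\mathcal O_{G\times Y}$ by flat base change of $f_*\mathcal O_X=\mathcal O_Y$. Hence any action $a_Y$ with $a_Y\circ p=\varphi$ is determined by $\varphi$.

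\emph{Existence.} The first step, and the main point, is a rigidity statement: for every point $(g,y)$, the morphism $\varphi$ contracts the fiber $p^{-1}(g,y)=\{g\}\times f^{-1}(y)$ to a point. Fix an ample line bundle $L$ on $Y$, and let $C\subset f^{-1}(y)$ be any curve. As $g$ ranges over the connected group $G$, the curves $g\cdot C$ form an algebraic family containing $C$ at $g=e$. They are therefore algebraically (hence numerically) equivalent to $C$, so
\[
L\cdot f_*[g\cdot C]=f^*L\cdot[g\cdot C]=f^*L\cdot[C]=0 .
\]
It follows that $f(g\cdot C)$ is a point. Since $f^{-1}(y)$ is connected and projective, any two of its points are joined by a chain of curves in the fiber, so $f(g\cdot f^{-1}(y))$ is a single point. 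The non-reduced case is harmless here, because this is a statement about closed points and $G_{\rm red}$ already carries them.

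\emph{Descent.} Next, I would show that a morphism $\varphi$ constant on the fibers of a proper surjective $p$ with $p_*\mathcal O=\mathcal O$ factors through $p$. Let $V\subset Y$ be affine open. The preimage $\varphi^{-1}(V)$ is a union of fibers of $p$. Because $p$ is proper and surjective, hence a closed map, $\varphi^{-1}(V)=p^{-1}(W)$ with $W$ the open set $G\times Y\setminus p(\varphi^{-1}(Y\setminus V))$. The ring map
\[
\mathcal O(V)\to\mathcal O(\varphi^{-1}V)=\mathcal O(p^{-1}W)=\mathcal O(W)
\]
then defines $W\to V$, and these morphisms glue to $a_Y\colon G\times Y\to Y$ with $a_Y\circ p=\varphi$. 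The action axioms (identity and associativity) hold after composing with the epimorphisms $p$ and $\mathrm{id}_G\times p$, and therefore hold on $Y$. Equivariance of $f$ is the identity $a_Y\circ p=\varphi$.

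\emph{The homomorphism.} For $G=\Aut^0(X)$, the action $a_Y$ is a $G$-action on the projective variety $Y$. By the functorial description of $\Aut(Y)$, it corresponds to a homomorphism of group schemes $\Aut^0(X)\to\Aut(Y)$. Since $\Aut^0(X)$ is connected and contains $e$, its image lies in $\Aut^0(Y)$, which gives $b_f$.

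I expect the rigidity step to be the main obstacle, namely making the numerical-equivalence argument clean for arbitrary (possibly singular, non-flat) fibers. If the curve argument is awkward, the fallback is the classical rigidity lemma applied over the connected base $G$: $\varphi$ contracts $\{e\}\times f^{-1}(y)$, hence it contracts every fiber $\{g\}\times f^{-1}(y)$.
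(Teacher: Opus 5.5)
Your proof is correct and follows the standard argument of \cite[Proposition 4.2.1, Corollary 4.2.6]{BSU13}, which the paper quotes instead of reproving: show that $(g,x)\mapsto f(g\cdot x)$ contracts the fibers of $\mathrm{id}_G\times f$, descend it using properness and $(\mathrm{id}_G\times f)_*\mathcal{O}_{G\times X}=\mathcal{O}_{G\times Y}$, and obtain $b_f$ from the representability of $\Aut(Y)$. The one caveat is that your ample-line-bundle version of the contraction step needs $Y$ quasi-projective, whereas the lemma only assumes $Y$ is a normal variety (when $X$ is projective, $Y$ is proper but possibly not projective, which still suffices to represent $\Aut(Y)$); so your fallback, the rigidity lemma for $G\times f^{-1}(y)\to Y$ over the irreducible group $G$, is the version that covers the stated generality.
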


Let $G$ be a connected algebraic group and $H\subset G$ be an algebraic subgroup. Suppose that $Y$ is an $H$-variety, then $H$ acts on $G \times Y$ by
\begin{equation}\label{eq: action}
  h \cdot (g, y)= (gh^{-1}, h y)  
\end{equation}
for $h\in H, g\in G, y\in Y$. If the quotient space is a variety, then we denote it by $G \times^H Y$. See \cite[\S 2.5]{Bri17} for detailed discussions.

We have the following definition of homogeneous fibrations (see \cite[Definition 1.1]{Bri12}).

\begin{definition}\label{def: homogeneous fibration}
A homogeneous fibration is a morphism $f: X \to A$ satisfying the following conditions:
\begin{enumerate}
\item $f_*\Oo_X = \Oo_A$,
\item $A$ is an abelian variety, and
\item $f$ is isomorphic to its pull-back by any translation in $A$.
\end{enumerate}
\end{definition}

The following result is a generalization of a theorem of Nishi and Matsumura (see \cite[Theorem 2]{Mat63}).

\begin{theorem}[{\cite[Theorem 2]{Bri10}, \cite[Proposition 1.2]{Bri12}}]\label{thm: group action}
Let $X$ be a normal projective variety and $G \subset \Aut^0(X)$ be a connected algebraic group.
\begin{enumerate}
\item For any homogeneous fibration $g: X \to A$, the $G$-action on $X$ induces an action of $A(G)$ on $A$ by translations.
\item There exists an $G$-equivariant homogeneous fibration $f: X \to A$ such that the resulting homomorphism $A(G) \to A$ is an isogeny.

\item For (2), we have $A = G/H$, where $H$ is a closed subgroup of $G$ such that $H \supset G_{\rm aff}$ and the quotient $H/G_{\rm aff}$ is finite. Then, we have $X \simeq G \times^H Y$, and $f$ can be identified with the natural $G$-equivariant fibration
\[
G \times^H Y \to G/H,
\]
where $Y$ is the scheme-theoretic fiber of $f$ over the identity element of $A$.
\end{enumerate}
\end{theorem}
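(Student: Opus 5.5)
The plan is to deduce all three parts from Blanchard's lemma (Lemma \ref{lem: Blanchard}) together with Chevalley's theorem and Poincaré complete reducibility, building the required fibration out of the Albanese morphism of $X$.

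For (1), let $g: X \to A$ be a homogeneous fibration. Since $g_*\Oo_X=\Oo_A$ and $g$ is projective, Lemma \ref{lem: Blanchard} gives a unique $G$-action on $A$ making $g$ equivariant. This yields a homomorphism $G \to \Aut^0(A)$. For an abelian variety, $\Aut^0(A)$ is the group of translations, identified with $A$ itself. Any homomorphism from the connected affine group $G_{\rm aff}$ to an abelian variety is trivial, so the map factors through $G/G_{\rm aff}=A(G)$. Hence $A(G)$ acts on $A$ by translations.

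For (2), fix $x\in X$ and compose the orbit map $G\to X$, $h\mapsto hx$, with $a_X$. After a translation this is a homomorphism of algebraic groups, which kills $G_{\rm aff}$ and so induces a homomorphism $A(G)\to \Alb(X)$. Let $C$ be its image, an abelian subvariety. By Poincaré complete reducibility, choose an abelian subvariety $C'\subset \Alb(X)$ with $C+C'=\Alb(X)$ and $C\cap C'$ finite. Then the composite $X\to \Alb(X)\to \Alb(X)/C'$ is $G$-equivariant, and $A(G)\to \Alb(X)/C'$ is an isogeny, since it is surjective with finite kernel.

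Next take the Stein factorization $X\xrightarrow{f} A\to \Alb(X)/C'$. By Blanchard, $G$ acts on the normal variety $A$ compatibly. It acts transitively, because $G$ surjects onto $\Alb(X)/C'$ and the fibers of $A\to\Alb(X)/C'$ are finite. A connected normal variety that is homogeneous under $G$ and finite over an abelian variety is itself an abelian variety, namely $G/\mathrm{Stab}$ with a stabilizer containing $G_{\rm aff}$. The map $A(G)\to A$ is still an isogeny, since it is surjective and its kernel sits inside a finite kernel. Homogeneity of $f$ follows from equivariance: translation by $a=h\cdot 0$ is induced by $h\in G$, and $h$ gives an isomorphism between $f$ and its pull-back by that translation.

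For (3), let $H$ be the stabilizer of the origin, so that $A=G/H$. Then $G_{\rm aff}\subset H$ because $G_{\rm aff}$ acts trivially on $A$ by (1), and $H/G_{\rm aff}$ is the kernel of the isogeny, hence finite. The scheme-theoretic fiber $Y=f^{-1}(0)$ is $H$-stable. The map $G\times Y\to X$, $(h,y)\mapsto hy$, is invariant under the $H$-action \eqref{eq: action}. Over $G$ it gives an isomorphism $G\times Y \simeq X\times_{A} G$ via $(h,y)\mapsto (hy,h)$, with inverse $(x,h)\mapsto (h, h^{-1}x)$. Since $G\to G/H$ is a faithfully flat $H$-torsor, descent shows that $X$ is the quotient of $G\times Y$ by $H$. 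So $G\times^H Y$ exists and equals $X$, and $f$ corresponds to the projection to $G/H$.

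I expect the main obstacle to be this last step: making the quotient and descent rigorous when $H$ is non-reduced or disconnected, and checking that the constructed $A$ really is an abelian variety with $f$ a fibration. The first thing I would try is to work with the fppf $H$-torsor $G\to G/H$ and descend the isomorphism above directly.
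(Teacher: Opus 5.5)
\textbf{There is a genuine gap in (2).} Several other steps are fine: (1) via Blanchard; the transitivity of $G$ on the Stein factor $A$; the triviality of the $G_{\rm aff}$-action on $A$; and the descent of $G\times Y\simeq X\times_{G/H}G$ along the $H$-torsor $G\to G/H$ in (3). The last is the argument of \cite[\S 2.5]{Bri17} that the paper points to. In characteristic $0$ the group $H$ is smooth, so the worry you raise about (3) is not the real obstacle.

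The gap is the sentence ``$A(G)\to \Alb(X)/C'$ is an isogeny, since it is surjective with finite kernel''. Poincar\'e reducibility only makes $C\cap C'$ finite, i.e.\ it shows that $C\to \Alb(X)/C'$ is an isogeny. The kernel of $A(G)\to \Alb(X)/C'$ is the preimage of $C\cap C'$ under the surjection $A(G)\to C$. This is finite if and only if $A(G)\to\Alb(X)$ has finite kernel. That is exactly the Nishi--Matsumura theorem (Theorem \ref{thm: Mat63}), which is the whole substance of (2). Your later claim that $\Ker(A(G)\to A)$ ``sits inside a finite kernel'', and hence the finiteness of $H/G_{\rm aff}$ in (3), rest on the same unproved fact.

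You can see that something must be missing: you never use that $G\subset \Aut^0(X)$ acts faithfully, yet (2) is false without faithfulness. Let an elliptic curve $E$ act trivially on $X=\Pp^1$. Your construction then produces $A(G)=E\to \Alb(X)/C'=0$, and there is no fibration of $\Pp^1$ onto an abelian variety isogenous to $E$.

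To close the gap you have to derive the finiteness from faithfulness. Quoting \cite[Theorem 2]{Mat63} would turn your argument into the route sketched in Remark \ref{rmk: Mat63}(2) (cf.\ \cite[\S 4]{Bri10}). Be aware, though, that this paper \emph{deduces} Theorem \ref{thm: Mat63} from the statement you are proving (Section \ref{sec: Albanese}). So that citation is acceptable only if you rely on Matsumura's original, independent proof. The paper itself gives no proof of the statement; it cites Brion, whose argument does not go through $\Alb(X)$. Roughly, Brion builds the equivariant map to an abelian variety isogenous to $A(G)$ from a $G_{\rm aff}$-linearized ample line bundle on $X$, and faithfulness enters in that construction.
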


\begin{remark}
The (3) in the above theorem is proved in \cite[\S 2.5, Page 97-98]{Bri17}.
\end{remark}

Let $\Bir(X)$ be the abstract group consisting of birational maps of $X$. Recall the notion of rational actions of algebraic groups on varieties.

\begin{definition}[{\cite[Definition 1, 2]{Kra18}}]\label{def: rational action}
Let $G$ be an algebraic group and $X$ be a variety. A map $\phi: G \to \Bir(X)$ is called a rational action if there is an open dense set $U \subset G \times X$ with the following properties:
\begin{enumerate}
\item The induced map $(g, x) \mapsto \phi(g)(x)$ from $U$ to $X$ is a morphism of varieties.
\item For every $g\in G$, the open set $U_g \coloneqq \{x \in X \mid (g, x) \in U\}$ is dense in $X$.
\item For every $g \in G$, the birational map $\phi(g): X \dto X$ is defined in $U_g$. 
\item $\phi: G \to \Bir(X)$ is a homomorphism of groups.
\end{enumerate}
\end{definition}

\begin{remark}\label{rmk: rational action}
(1) In the above definition, we also say that $X$ is equipped with the rational action of $G$.

(2) If $X$ is equipped with a rational action of $G$ and $X$ is an open subset of a variety $Y$, then $Y$ is naturally equipped with a rational action of $G$.

(3) On the other hand, if $X$ is equipped with a rational action of $G$, then for an open set $V \subset X$, $G$ may not act rationally on $V$. For one thing, there may not exist a well-defined induced morphism 
\[
U \cap (G \times V) \to V.
\] However, as a particular case of Lemma \ref{lem: rational action on open set}, if $G$ acts on $X$ regularly, then $G$ acts rationally on any open set of $X$.
\end{remark}

The following theorem is well-known to experts (for example, see \cite[\S 1.7, 1.8]{Kra18} and \cite[\S 1]{Bri22}). 

\begin{theorem}\label{thm: Weil's theorem}
Let $X$ be a variety equipped with a rational action of a connected algebraic group $G$. Then $X$ is $G$-equivariantly birational to a smooth projective variety $Y$ equipped with a regular $G$-action. 
\end{theorem}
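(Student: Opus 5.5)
The plan is to follow Weil's regularization theorem: first produce a birational model on which the rational action of $G$ is regular, then resolve singularities equivariantly, and finally compactify equivariantly.

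\emph{Step 1: a regular action on an open set.} Since $\phi$ is a rational action, there is an open dense $U \subset G \times X$ on which $(g,x) \mapsto \phi(g)(x)$ is a morphism $\mu$. The standard argument (Weil, Rosenlicht) uses the homomorphism property (4) together with the density conditions (2) and (3). Concretely, consider
\[
(g,h,x) \mapsto \bigl(gh,\ \phi(h)(x)\bigr),
\]
and glue translates. This produces a variety $X_0$ birational to $X$ on which $G$ acts by a genuine morphism $G \times X_0 \to X_0$. One way to organize it is as follows. Choose $x$ generic and form the ``graph'' $\{(g, \phi(g)(x))\}$. Then take the quotient of $G \times X$ by the equivalence relation $(g,x) \sim (g', x')$ if and only if $\phi(g)(x) = \phi(g')(x')$. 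One shows that this quotient exists as a (possibly non-separated, then separated after an argument) variety, and that it contains a dense open subset of $X$. Alternatively, I would simply invoke Kraft's version: for a rational action there is a $G$-stable birational model $X_0$ with a regular action.

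\emph{Step 2: equivariant normalization and smoothing.} Normalization of $X_0$ is $G$-equivariant, so we may assume $X_0$ is normal. By Sumihiro's theorem, applied after replacing $G$ by its affine part if needed (or using Brion's extension to arbitrary connected $G$ on normal varieties), $X_0$ is covered by $G$-stable quasi-projective open sets. Replace $X_0$ by one of them, which is quasi-projective. Then apply functorial (canonical) resolution of singularities. Since it commutes with smooth morphisms, in particular with the action and projection maps $G \times X_0 \to X_0$, the $G$-action lifts to a smooth $G$-variety.

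\emph{Step 3: equivariant projective compactification.} This is the main obstacle. For a connected $G$ acting on a smooth quasi-projective $X_0$, I would choose a $G$-linearized ample line bundle. For the affine part such a bundle exists after passing to a power, by Sumihiro and Mumford. For general connected $G$, use Brion's result that a normal quasi-projective $G$-variety admits a $G$-equivariant locally closed embedding into $\mathbb{P}(V)$-bundles over $A(G)$, or into a projective $G$-variety. Taking the closure gives a projective $G$-variety $\overline{X}$ containing $X_0$ as a dense open $G$-stable set. Then apply equivariant canonical resolution once more to get a smooth projective $Y$ with a regular $G$-action. By construction, $Y$ is $G$-equivariantly birational to $X$.

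The hard points are the construction of the regular model in Step 1 and the equivariant compactification for non-affine $G$ in Step 3. For these I would rely on the cited references (\cite{Kra18}, \cite{Bri22}) rather than reprove them.
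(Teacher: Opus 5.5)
Your proposal is correct and follows essentially the paper's route. The paper gives no argument of its own, only citing \cite[\S 1.7, 1.8]{Kra18} for Weil's regularization and \cite[\S 1]{Bri22} for the smooth projective model. Your three steps are the standard argument behind those citations: regularize; use Brion's extension of Sumihiro to obtain a $G$-stable quasi-projective open set and an equivariant completion inside a $\mathbb{P}(V)$-bundle over an abelian quotient of $G$; then apply functorial resolution.

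One small correction to Step 1 (harmless, since you defer to Kraft there). The equivalence relation should be the formal one: $(g,x)\sim(g',x')$ iff $\phi(g'^{-1}g)$ is defined at $x$ and sends $x$ to $x'$. Read literally, your condition $\phi(g)(x)=\phi(g')(x')$ only identifies points already lying in $X$, so the quotient would just be an open subset of $X$ and would give no new regular model.
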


\section{Homogeneous fibrations on varieties with nef anticanonical divisors}\label{sec: fibrations on varieties with nef anticanonical divisors}

As mentioned in the introduction, Albanese morphisms and MRCC fibrations of projective manifolds with nef anticanonical divisors are locally constant fibrations. These locally constant fibrations are transcendental. In this section, we establish similar results in the algebraic category by the theory of algebraic groups. Note that our proofs rely on these transcendental descriptions. Hence, they do not give new proofs of the aforementioned results.

First, recall the definition of locally constant fibration (see \cite[Definition 2.3]{MW23}).

\begin{definition}[locally constant fibration]\label{def: locally constant fibration}
Let $\phi: X \to S$ be a fibration between normal analytic varieties, and let $\De$ be a Weil $\Qq$-divisor on $X$. Then $\phi: X \to S$ is said to be a locally constant fibration with respect to the log pair $(X, \De)$ if it satisfies the following conditions:
\begin{enumerate}
    \item $\phi: X \to S$ is a locally trivial analytic fiber bundle with the fiber $F$.
    \item Every irreducible component $\De_i$ of $\De$ is horizontal (i.e., $\phi(\De_i)=S$).
    \item There exists a Weil $\Qq$-divisor $\De_F$ on $F$ and a representation $\rho: \pi_1(S) \to \Aut(F)$ of the fundamental group $\pi_1(S)$ to the automorphism group $\Aut(F)$ such that
    \begin{enumerate}
        \item $\De_F$ is invariant under the action of $\pi_1(S)$;
        \item $(X,\De)$ is isomorphic to the quotient $(\ti S \times^\rho F, {\rm pr}^*_2\De_F)$ over $S$. Here $\ti S$ is the universal cover of $S$ and $\gamma \in \pi_1(S)$ acts on $\ti S \times F$ by
        \[
        \gamma \cdot (s, z) \coloneqq (\gamma \cdot s, \rho(\gamma)(z))
        \] for any $(s,z) \in \ti S \times F$.
    \end{enumerate}
\end{enumerate}
\end{definition}

The following results generalize \cite{Cao19, CH19} from smooth projective varieties to projective varieties with klt singularities.

\begin{theorem}[{\cite[Theorem A]{Wan22}, \cite[Corollary 4.9]{MW23}}]\label{thm: locally constant fibration of Albanese morphism}
Let $(X, \De)$ be a projective klt pair with nef log anticanonical divisor $-(K_X+\De)$. Then, the Albanese map of $X$ is a locally constant fibration with respect to $(X,\De)$.
\end{theorem}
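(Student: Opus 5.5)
This statement is quoted from \cite[Theorem A]{Wan22} and \cite[Corollary 4.9]{MW23}, and in the paper it is used as a black box. If I had to reprove it, I would follow Cao's strategy \cite{Cao19}, adapted to klt pairs. Throughout, write $f\coloneqq a_X\colon X\to A\coloneqq \Alb(X)$. By Proposition \ref{prop: albanese}, klt singularities are rational, so $\Alb(X)=\Alb(W)$ for a resolution $W$ and $f$ is a genuine morphism.

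\textbf{Step 1: $f$ is a fibration.} I would first show that $f$ is surjective with connected fibres, so that $f_*\Oo_X=\Oo_A$. For this I would use the klt analogue of Zhang's theorem: since $-(K_X+\De)$ is nef, the Albanese morphism is surjective. Connectedness of the fibres would come from the Stein factorization $X\to B\to A$. Here $B\to A$ is finite and surjective. Comparing Kodaira dimensions via subadditivity (or Zhang's argument) should show it is étale. The universal property of the Albanese then forces $B\to A$ to be an isomorphism.

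\textbf{Step 2: flat direct images.} Fix an $f$-very ample line bundle $L$ on $X$. For $m\gg0$, consider the vector bundles $E_m\coloneqq f_*\Oo_X(mL)$, suitably twisted by $\Oo_A(-c_mH)$ or by a multiple of $K_{X/A}+\De$. The key claim is that each $E_m$ is numerically flat on $A$. This is the main obstacle, and it is where the analytic input enters. Writing
\[
mL = K_{X/A}+\De+\bigl(mL-(K_{X/A}+\De)\bigr),
\]
and using that $-(K_{X/A}+\De)=-(K_X+\De)$ is nef, the Păun--Takayama positivity of twisted direct images gives weak positivity (pseudo-effectivity) of $E_m$ and of its dual twists. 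One then normalizes by determinants: on an abelian variety, a vector bundle that is pseudo-effective and has numerically trivial normalized determinant is numerically flat. The delicate point is controlling $\det E_m$. I would try Cao's trick of comparing $\det f_*(mL)$ with $f_*$ of $(\mathrm{rank})$-fold fibre products. Klt singularities are handled by passing to a log resolution and using the klt condition to preserve the $L^2$-extension/multiplier ideal estimates.

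\textbf{Step 3: local constancy.} Numerically flat bundles on $A$ are extensions of hermitian flat bundles. In Cao's argument the multiplication maps $S^kE_m\to E_{km}$ are compatible with the flat structure, so the ideal of $X$ in $\mathbb P(E_m)$ is a flat subsheaf. Hence $X\subset \mathbb P(E_m)$ is preserved by the flat connection. This yields a representation $\rho\colon\pi_1(A)\to \mathrm{PGL}$ preserving the fibre $F$, and an isomorphism $X\simeq \ti A\times^\rho F$ over $A$.

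\textbf{Step 4: the boundary.} For $\De$, I would run the same argument with $f_*\Oo_X(mL-\lceil\cdot\rceil\De)$, or argue directly. A vertical component of $\De$ would make $-(K_X+\De)$ fail to be nef on a suitable curve, or would contradict the flatness of $E_m$. So every component of $\De$ is horizontal. Its restriction to fibres then varies flatly, so $\De={\rm pr}_2^*\De_F$ for a $\pi_1(A)$-invariant $\De_F$.
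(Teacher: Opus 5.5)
The paper gives no proof of this statement. It is quoted from \cite{Wan22, MW23} and used as a black box; the authors say explicitly that their results rely on these transcendental inputs and do not reprove them. Your outline is the strategy of \cite{Cao19} on which those papers build, and its architecture is the right one: show $a_X$ is a fibration, prove numerical flatness of normalized direct images $E_m$, then use Simpson's functorial flat structure on numerically flat bundles to make the ideal of $X\subset\mathbb P(E_m)$ flat. As a proof, however, it has genuine gaps.

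The core of Step~2 is deferred to ``Cao's trick'', and the criterion you quote is false for plain pseudo-effectivity. For instance, $\mathcal O_E(p)\oplus\mathcal O_E(-p)$ on an elliptic curve has trivial determinant and effective $\mathcal O_{\mathbb P}(1)$, but it is not nef. You need a strong positivity of $E_m\otimes(\det E_m)^{-1/r_m}$, such as weak positivity in Viehweg's sense. You also need normalizations compatible in $m$, so that $S^kE_m\to E_{km}$ is a morphism of numerically flat bundles in Step~3. Obtaining exactly this is the content of \cite{Cao19} and, for klt pairs, of the cited works.

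In Step~1, subadditivity cannot give $\kappa(B)\le 0$. It bounds $\kappa(X)$ from below, and here $\kappa(X)=-\infty$ unless $\Delta=0$ and $K_X\equiv 0$. What works is a Zhang-type theorem that $X$ cannot dominate a positive-dimensional variety of general type. Combine it with Kawamata's theorem that a normal variety finite over an abelian variety becomes, after an \'etale cover, a product of an abelian variety and a variety of general type. Base-changing $X$ along that cover preserves the hypotheses, so $B\to A$ is \'etale, and your universal-property argument then finishes.

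The step with no argument behind it at all is Step~4. The flat structure from Steps~2--3 is determined by the bundles $E_m$, hence by $X$ and $L$ alone; $\Delta$ enters only through the positivity input. So this structure says nothing about where $\Delta$ sits.

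Horizontality can still be recovered numerically. First normalize, possibly after an \'etale base change of $A$, so that the $E_m$ themselves are numerically flat. Then $M=\mathcal O_{\mathbb P(E_m)}(1)|_X$ is nef and $c_1(f_*M^k)\equiv 0$ for $k\gg 0$. Riemann--Roch then gives $K_X\cdot M^{\dim F}\cdot f^*H^{\dim A-1}=0$ for $H$ ample on $A$. Hence $-(K_X+\Delta)\cdot M^{\dim F}\cdot f^*H^{\dim A-1}=-\Delta\cdot M^{\dim F}\cdot f^*H^{\dim A-1}$, which is negative if $\Delta$ has a vertical component, contradicting nefness.

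Your claim that the horizontal part ``varies flatly'', i.e.\ that each component of $\Delta$ is invariant under the flat structure so that $\Delta={\rm pr}_2^*\Delta_F$, does not follow from anything you establish. You would have to show, for instance, that for each component of $\Delta$ the subsheaf of $E_m$ of sections vanishing along it is a numerically flat subbundle. This is precisely what the pair statement adds to Cao's theorem. Without it, the outline gives at most local constancy of $a_X$ as a map of varieties, not local constancy with respect to $(X,\Delta)$.
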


Recall that a finite morphism $X' \to X$ is called quasi-\'etale if it is \'etale outside a subset of codimension at least 2 in $X'$.

\begin{theorem}[{\cite[Theorem 1.4]{CH19}, \cite[Theorem 1.1]{MW23}}]\label{thm: locally constant fibration of MRCC}
Let $(X, \De)$ be a projective klt pair with the nef log anticanonical divisor $-(K_X+\De)$. Then, there exists a finite quasi-\'etale cover $\mu: X' \to X$ from a normal projective variety $X'$ satisfying the following properties:
\begin{enumerate}
    \item $X'$ admits a holomorphic MRCC fibration $\psi: X' \to Y$.
    \item $Y$ is a projective klt variety with a numerically trivial canonical divisor.
    \item $\psi: X' \to Y$ is a locally constant fibration with respect to $(X',\De')$, where $\De'$ is the Weil $\Qq$-divisor defined by the pullback $\De' \coloneqq \mu^*\De$.
\end{enumerate}
Moreover, if $X$ is smooth and $\De=0$, then $X'$ can be taken to be $X$.
    \end{theorem}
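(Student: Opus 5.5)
This theorem is quoted from \cite[Theorem 1.4]{CH19} and \cite[Theorem 1.1]{MW23}, and the paper only uses it. If I were to prove it myself, I would follow the analytic strategy of Cao--H\"oring and Matsumura--Wang: use positivity of direct images to show that the MRC(C) fibration has a base with numerically trivial canonical class and is locally constant. The first step is to reduce to a log smooth situation. Take a log resolution $\pi\colon\tilde X\to X$ and write $K_{\tilde X}+\tilde\De=\pi^*(K_X+\De)+E$, where $\tilde\De$ has coefficients $<1$ and $E\ge 0$ is exceptional. Since $X$ is klt, rational chain-connectedness of fibres is preserved by \cite[Corollary 1.6]{HM07}. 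So an MRCC fibration of $X$ is induced by the MRC fibration $\tilde f\colon\tilde X\dto Z$, and after blowing up we may assume $\tilde f$ is a morphism onto a smooth $Z$. By Graber--Harris--Starr, $Z$ is not uniruled, so $K_Z$ is pseudo-effective.

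The second step is to show that $K_Z\equiv 0$ on a suitable model and that the fibration is equidimensional, hence holomorphic on $X$ after a finite cover. Fix an ample $A$ on $X$. For $m\gg0$ I would study the direct images
\[
\mathcal E_m\coloneqq \tilde f_*\Oo_{\tilde X}\bigl(m(A+\varepsilon_m\pi^*A-\pi^*(K_X+\De))+mK_{\tilde X/Z}+\dots\bigr),
\]
which carry singular Hermitian metrics with semipositive curvature (Berndtsson--P\u{a}un, P\u{a}un--Takayama). Since $-(K_X+\De)$ is nef, twisting by arbitrarily small multiples of $A$ shows that $\det\mathcal E_m$ is pseudo-effective modulo $-K_Z$-type terms. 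Combining this with the pseudo-effectivity of $K_Z$, I would conclude that $\det\mathcal E_m\equiv 0$ and $K_Z\equiv0$, and then that each $\mathcal E_m$ is numerically flat, hence a flat vector bundle. Flatness of all $\mathcal E_m$ gives an embedding of $X$ into the projectivization of a flat bundle over the base. This forces the fibres to vary trivially along flat sections, so $f$ is a locally trivial analytic bundle with structure group reduced through a representation $\rho\colon\pi_1\to\Aut(F)$, as in Definition~\ref{def: locally constant fibration}. The boundary components must be horizontal because vertical components would contradict nefness of $-(K_X+\De)$ restricted to curves in fibres over a numerically trivial base. Replacing $Z$ by the canonical model of $(Z,\text{branch divisor})$, which is klt with $K\equiv0$, and passing to the quasi-\'etale cover that resolves the multiple fibres in codimension one, gives $X'\to X$ and $\psi\colon X'\to Y$ with properties (1)--(3).

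The main obstacle is the second step: proving numerical flatness of the $\mathcal E_m$ and the resulting local triviality. The subtle issue is controlling singular metrics along the locus where $\tilde f$ is not flat, and showing that the multiple-fibre and exceptional contributions vanish. I would first handle the smooth case $\De=0$, where no cover is needed, as in \cite{CH19}. I would then extend to klt pairs through the $\Qq$-twisted direct-image estimates of \cite{MW23}, absorbing $E$ into the adjoint bundle.
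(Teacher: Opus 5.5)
The paper gives no proof of this statement. It is quoted from \cite{CH19} and \cite{MW23}, and the only addition is Remark~\ref{rmk: indeed locally constant}, which notes that the argument of \cite{CH19} gives local constancy and not just local triviality. So your first sentence is exactly what the paper does. The self-contained argument you sketch names the right strategy (positivity of direct images), but it fails at its central step.

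\textbf{The flatness step.} The claim that $\det\mathcal{E}_m\equiv 0$, and hence that $\mathcal{E}_m$ is numerically flat, is false in general. Take $X=Y\times\Pp^n$ with $K_Y\sim 0$, $\De=0$, $\pi=\mathrm{id}$ and $A=\mathrm{pr}_1^*A_Y+\mathrm{pr}_2^*\Oo(1)$. Ignoring the unspecified ``$\dots$'', your bundle is $\Oo_Y(kA_Y)\otimes H^0(\Pp^n,\Oo(k))$ for some $k>0$, and its determinant is a positive multiple of an ample class. The claim also could not follow from what you list: ``$\det\mathcal{E}_m$ pseudo-effective up to $K_Z$-terms'' and ``$K_Z$ pseudo-effective'' are lower bounds in the same direction, so they never give numerical triviality.

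The missing idea, as in \cite{Cao19, CH19}, is normalization. Write $\mathcal{E}_m=\ti f_*L_m$ of rank $r_m$.
\begin{itemize}
\item The canonical section of $\bigotimes_i\mathrm{pr}_i^*L_m\otimes(\det\mathcal{E}_m)^{-1}$ on the $r_m$-fold fibre product, combined with positivity of direct images, shows that the $\Qq$-twisted bundle $\mathcal{E}_m\langle-\tfrac{1}{r_m}\det\mathcal{E}_m\rangle$ is weakly positively curved.
\item Its $c_1$ vanishes by construction, so it is numerically flat; $\mathcal{E}_m$ itself is only projectively flat.
\item The only input needed on the base is pseudo-effectivity of $K_Z$, which makes $-(K_{X/Z}+\De)=-(K_X+\De)+f^*K_Z$ pseudo-effective on a holomorphic model. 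The relation $K_Z\equiv 0$ needs a separate upper bound, naturally obtained after local triviality.
\end{itemize}
On $\ti X$ you also cannot simply ``absorb'' $E$. The sheaves $\ti f_*\Oo(\pi^*(mA)+E)$ and $\ti f_*\Oo(\pi^*(mA))$ may differ along divisors of $Z$ over which $E$ lies, and this changes $c_1$. Finally, equidimensionality of $\ti f$ says nothing about whether $X\dto Z$ is holomorphic: $\mathbb{F}_1\to\Pp^1$ resolves a linear projection of $\Pp^2$.

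\textbf{The concluding steps.} These are also wrong as stated.
\begin{itemize}
\item Vertical components of $\De$ cannot be ruled out using curves in fibres, since a divisor $\psi^*D$ has degree zero on all of them. You have to test against horizontal curves using the flat structure.
\item The canonical model of a pair with numerically trivial log canonical class is a point. Moreover, if $K_Y+B\equiv 0$ with $B\ge 0$ and $K_Y$ pseudo-effective ($Y$ is not uniruled), then $B=0$. So there is no branch divisor, and codimension-one multiple fibres are not why a cover is needed.
\end{itemize}
The obstruction lives over the singular locus of the base, in codimension at least $2$. Take $X=(A\times\Pp^1)/\langle(-1,\sigma)\rangle$, with $A$ an abelian surface and $\sigma$ an involution of $\Pp^1$.
\begin{itemize}
\item $X$ is klt with $-K_X$ nef.
\item Its MRCC fibration $X\to A/\langle -1\rangle$ is holomorphic but not locally trivial over the sixteen singular points of the base. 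Local triviality there would make $X$ singular along the whole fibre over each such point, whereas $X$ has only isolated singularities.
\item The quasi-\'etale cover $A\times\Pp^1\to X$ repairs this. Your recipe would return $X'=X$, since there are no codimension-one multiple fibres.
\end{itemize}
This behaviour over the singularities of the base is what the quasi-\'etale cover in the statement is there to handle, and your sketch does not address it.
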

    
    \begin{remark}\label{rmk: indeed locally constant}
    In \cite[Theorem 1.4]{CH19}, they only assert that the MRCC fibration is locally trivial when $X$ is smooth and $-K_X$ is nef. However, their argument indeed shows that the MRCC fibration is locally constant. We thank Juanyong Wang for explaining this fact.
    \end{remark}

\subsection{Albanese morphisms of varieties with nef anticanonical divisors}\label{subsec: Albanese}

Under the notation of Definition \ref{def: locally constant fibration}, let $\pi: \ti S \to S$ be the universal cover of $S$. Let $T_X$ be the tangent sheaf of $X$ and $T_{X,x}$ be the tangent space at the point $x$.
  
\begin{lemma}\label{lem: lift vector field}
    If $f: X \coloneqq \ti S \times^\rho F \to S$ is a locally constant fibration, then the natural map 
    \[
    T_f: H^0(X, T_X) \to H^0(X,f^*T_S)=H^0(S, T_S)
    \] is surjective. 
\end{lemma}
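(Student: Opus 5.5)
The plan is to lift vector fields from $S$ to $X$ using the flat structure of $f$. Write $X = \ti S \times^\rho F$, the quotient of $\ti S \times F$ by $\pi_1(S)$ acting diagonally via deck transformations on $\ti S$ and via $\rho$ on $F$. Let $v \in H^0(S, T_S)$ be a holomorphic vector field. Pull it back along the étale universal cover $\pi: \ti S \to S$ to get $\ti v = \pi^*v \in H^0(\ti S, T_{\ti S})$. Since $v$ descends from $\ti S$, the field $\ti v$ is invariant under deck transformations: $\gamma_*\ti v = \ti v$ for all $\gamma \in \pi_1(S)$.

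Next, form the vector field $(\ti v, 0)$ on $\ti S \times F$, with $\ti v$ in the first factor and zero in the second. This uses the splitting $T_{\ti S \times F} = {\rm pr}_1^*T_{\ti S} \oplus {\rm pr}_2^*T_F$. If $F$ is singular, use the tangent sheaf, i.e. derivations of the structure sheaf; the derivation $\ti v \otimes 1$ acts on the first factor of the local rings. The diagonal action of $\gamma$ is $(\gamma, \rho(\gamma))$, and it sends $(\ti v, 0)$ to $(\gamma_*\ti v, \rho(\gamma)_*0) = (\ti v, 0)$. So the field is $\pi_1(S)$-invariant.

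The quotient map $\ti S \times F \to X$ is a local biholomorphism, because the action is free and properly discontinuous, being so on the $\ti S$ factor. Hence $(\ti v, 0)$ descends to a holomorphic vector field $w$ on $X$. By construction, the projection ${\rm pr}_1$ intertwines $(\ti v,0)$ with $\ti v$, so $df(w) = f^*v$. In other words, $T_f(w) = f^*v$, where $T_f$ is the composite $H^0(X,T_X) \to H^0(X, f^*T_S)$. Since $f_*\Oo_X = \Oo_S$, the projection formula identifies $H^0(X,f^*T_S)$ with $H^0(S,T_S)$, which gives surjectivity.

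The main point to check carefully is that the work is done in the analytic category while the tangent sheaf is algebraic. By GAGA, since $X$ is projective, holomorphic sections of $T_X$ coincide with algebraic ones. The same holds on $S$ when $S$ is projective, as in the applications. One also needs that the tangent sheaf of $\ti S \times F$ is compatible with étale (locally biholomorphic) quotients, which holds because derivations are local for the analytic topology. For singular $F$, note that $T_{\ti S\times F} = \mathcal{H}om(\Omega_{\ti S \times F}, \Oo)$ contains ${\rm pr}_1^*T_{\ti S}$ as a direct summand, since $\Omega_{\ti S\times F} = {\rm pr}_1^*\Omega_{\ti S}\oplus {\rm pr}_2^*\Omega_F$ and $\ti S$ is smooth when $S$ is, as in the Albanese case. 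If $S$ is allowed to be singular, the same argument works on $T_S = \mathcal{H}om(\Omega_S,\Oo_S)$ using the local product structure.
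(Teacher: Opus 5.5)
Your proposal is correct and follows the paper's proof essentially step by step. You pull $v$ back to a deck-invariant field $\tilde v$ on $\tilde S$, take the field $(\tilde v,0)$ on $\tilde S\times F$, check that it is invariant under the diagonal $\pi_1(S)$-action, and descend it to a field $w$ on $X$ with $T_f(w)=v$. Your extra remarks on GAGA, singular fibers, and the identification $H^0(X,f^*T_S)=H^0(S,T_S)$ supply details the paper leaves implicit.
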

\begin{proof}
    Choose $v \in H^0(S, T_S)$, then $v$ naturally lifts to a vector field $\ti v \in H^0(\ti S, T_{\ti S})$ such that
    \[
    \ti v(\gamma \cdot \ti t) = v(x) \in T_{\ti S, \gamma\ti t} \simeq T_{S, t}
    \] for any $\gamma\in \pi_1(S)$, $\ti t \in \ti S$ and $t = \pi(\ti t)$. Then, $\ti v$ trivially lifts to a vector field $\ti w$ on $\ti S \times F$ such that
    \[
    \ti w(\ti t, y)= \ti v(\ti t) \times \{0\} \in T_{\ti S, \ti t} \times T_{F,y},
    \] where $\ti t\in\ti S$ and $y\in F$.

    We claim that $\ti w$ is $\pi_1(S)$-invariant. That is,
    \[
    \ti w(\gamma \cdot \ti t, \rho(\gamma)(y)) = \ti w(\ti t, y)
    \] for any $\gamma\in \pi_1(S)$. This holds as 
    \[
    \ti w(\gamma\cdot \ti t, \rho(\gamma)(y)) = \ti v(\gamma\cdot \ti t) \times \{0\}= v(t) \times \{0\}
    \] and
    \[
    \ti w(\ti t, y) = \ti v(\ti t) \times \{0\}= v(t) \times \{0\}
    \] by construction.

    Therefore, $\ti w$ descents to a vector field $w$ on $X$. It is straightforward to see that $T_f(w)=v$.
\end{proof}

Recall that $A(\Aut^0(X))$ denotes the Albanese variety of the algebraic group $\Aut^0(X)$. The following is a direct consequence of Theorem \ref{thm: group action}. 

\begin{lemma}\label{lem: same as Alb}
    If $\dim A(\Aut^0(X)) \geq \dim \Alb(X)$, then there exists a normal subgroup $H \subset \Aut^0(X)$, with $\Aut^0(X)_{\rm aff} \subset H$ and $H/\Aut^0(X)_{\rm aff}$ a finite group, such that the Albanese morphism $a_X$ is the same as the natural morphism 
    \[
    \Aut^0(X) \times^H F \to \Aut^0(X)/H,
    \] where $F$ is the fiber of $a_X$ over the identity element of $\Alb(X)$.
\end{lemma}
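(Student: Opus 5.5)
We would derive the lemma directly from Theorem \ref{thm: group action} applied to $G=\Aut^0(X)$, combined with the universal property of the Albanese morphism. Implicitly $X$ is normal projective, as in the setting of this section. By Theorem \ref{thm: group action}(2), there is a $G$-equivariant homogeneous fibration $f: X\to A$ such that $A(G)\to A$ is an isogeny. By (3) of the same theorem, $A=G/H$ with $G_{\rm aff}\subset H$ and $H/G_{\rm aff}$ finite. Moreover $X\simeq G\times^H Y$, where $Y=f^{-1}(e)$, and $f$ is the natural map $G\times^H Y\to G/H$.

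Since $G$ is connected, $H\supset G_{\rm aff}$, and $G/G_{\rm aff}=A(G)$ is commutative, $H$ is the preimage of a finite subgroup of the abelian variety $A(G)$. Hence $H$ is normal in $G$, which gives the normality claim.

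Next we compare $f$ with $a_X$. By the universal property of the Albanese morphism there is a unique morphism $g:\Alb(X)\to A$ with $f=g\circ a_X$, after normalizing so that base points go to identities; this changes $g$ only by a translation. Since $f$ is surjective with $f_*\mathcal O_X=\mathcal O_A$, the morphism $g$ is surjective. Hence
\[
\dim \Alb(X)\ge \dim A=\dim A(G)\ge \dim\Alb(X),
\]
where the last inequality is the hypothesis, so $g$ is an isogeny.

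The main step is to show that $g$ is in fact an isomorphism. Let $a_X=h\circ c$ be the Stein factorization, $X\to B\to \Alb(X)$. Then $f=(g\circ h)\circ c$ with $c_*\mathcal O_X=\mathcal O_B$, and $g\circ h: B\to A$ is finite because $g$ and $h$ are finite. Since $f_*\mathcal O_X=\mathcal O_A$, we get $(g\circ h)_*\mathcal O_B=\mathcal O_A$. A finite surjective morphism of normal varieties with this property is birational, and by Zariski's main theorem it is an isomorphism. Therefore $g\circ h$ is an isomorphism, so $h$ is a closed immersion, and $g$ is bijective on the image of $h$. Because $a_X(X)$ generates $\Alb(X)$ and $g\circ h$ is an isomorphism, the translate $\ker g + h(B)$ maps isomorphically onto $A$ only if $\ker g$ is trivial; hence $g$ is an isomorphism.

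An alternative route for this step is the universal property applied to $f$. Since $f$ is a morphism to an abelian variety through which $a_X$ factors after composing with $g^{-1}$, the map $g\circ h$ being an isomorphism gives a section $A\to\Alb(X)$ of $g$ whose image contains $a_X(X)$. As $a_X(X)$ generates $\Alb(X)$, that image is all of $\Alb(X)$, and again $g$ is an isomorphism.

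Consequently $a_X$ is identified with $f$, the fiber $F$ of $a_X$ over $e$ equals $Y$, and $a_X$ is the natural morphism $\Aut^0(X)\times^H F\to \Aut^0(X)/H$, as claimed.
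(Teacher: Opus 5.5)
Your proof is correct and takes essentially the same route as the paper. You apply Theorem~\ref{thm: group action} to $G=\Aut^0(X)$, factor $f$ through $a_X$ by the universal property, and use the dimension hypothesis to force the induced map $g\colon \Alb(X)\to G/H$ to be an isomorphism. Your Stein-factorization step, $(g\circ h)_*\mathcal O_B=f_*\mathcal O_X=\mathcal O_A$ with $g\circ h$ finite, is a slightly more careful version of the paper's ``$h$ is generically finite and a fibration, hence birational''.

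The sentence about $\ker g+h(B)$ is garbled, but nothing is lost. Since $h(B)\cong A$ is a closed subvariety of $\Alb(X)$ of dimension $\dim\Alb(X)$, the map $h$ is an isomorphism, and hence so is $g=(g\circ h)\circ h^{-1}$.
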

\begin{proof}
    By Theorem \ref{thm: group action}, there exist a normal subgroup  $H \subset \Aut^0(X)$, with $\Aut^0(X)_{\rm aff} \subset H$ and $H/\Aut^0(X)_{\rm aff}$ a finite group, and an $\Aut^0(X)$-equivariant fibration
    \[
    f: X \simeq \Aut^0(X) \times^H Y \to \Aut^0(X)/H,
    \] where $Y$ is the fiber over the identity element of $\Aut^0(X)/H$. As $\Aut^0(X)/H$ is an abelian variety, by the universal property of Albanese morphisms, there exists a unique morphism $h: \Alb(X) \to \Aut^0(X)/H$ such that $f= h \circ a_X$. Hence, 
    \[
    \dim A(\Aut^0(X)) = \dim (\Aut^0(X)/H) \leq \dim \Alb(X).
    \] By the assumption, we see that $h$ is a generically finite morphism. As $f$ is a fibration, $h$ is also a fibration. Consequently, $h$ must be an isomorphism as it is a birational morphism between abelian varieties. 
\end{proof}

Now, we show that Albanese morphisms are homogeneous fibrations when log anticanonical divisors are nef.

\begin{proof}[Proof of Theorem \ref{thm: homogeneous fibration of Albanese morphism}]
By Theorem \ref{thm: locally constant fibration of Albanese morphism}, $a_X$ is a locally constant fibration. By Lemma \ref{lem: Blanchard}, there exists a natural homomorphism of algebraic groups
\[
b: \Aut^0(X) \to \Aut^0(\Alb(X)) \simeq \Alb(X).
\] 

Set $\Cc[\ep] = \Cc[x]/(x^2)$. Then, we have 
\[
T_{\Aut^0(X),e} = \Hom_\Cc(\spec \Cc[\ep], \Aut(X)) = \Aut(X_{\spec \Cc[\ep]}/\spec \Cc[\ep]) \simeq H^0(X, T_X)
\] (see \cite[Lemma 3.4]{MO67}). Furthermore, the natural morphisms yield the following commutative diagram
             \begin{equation}\label{eq: commutative diagram}
\begin{tikzcd}
T_{\Aut^0(X),e} \arrow{r}{\simeq} \arrow{d}{T_b}& H^0(X, T_X) \arrow{d}{T_{a_X}}\\
 T_{\Alb(X),e}   \arrow{r}{\simeq} & H^0(\Alb(X), T_{\Alb(X)}).
\end{tikzcd}
\end{equation} By Lemma \ref{lem: lift vector field}, the natural map
\[
T_{a_X}: H^0(X, T_X) \to H^0(X,a_X^*T_{\Alb(X)}) =H^0(\Alb(X), T_{\Alb(X)})
\] is surjective. Hence, $b: \Aut^0(X) \to \Aut^0(\Alb(X)) \simeq \Alb(X)$ is surjective. Therefore, by the universal property of the Albanese morphism $\Aut^0(X) \to A(\Aut^0(X))$, there exists a surjective morphisms $A(\Aut^0(X)) \to \Alb(X)$. In particular, we have
\[
\dim A(\Aut^0(X)) \geq \dim \Alb(X).
\] The desired claim then follows from Lemma \ref{lem: same as Alb}.
\end{proof}

\begin{remark}
  As shown by \cite[Theorem 1.1]{BFPT24}, Theorem \ref{thm: homogeneous fibration of Albanese morphism} fails even in the birational sense for lc singularities. To be precise, \cite[Theorem 1.1]{BFPT24} constructs a variety $X$ with lc singularities and $K_X \sim 0$. But any fiber of the Albanese morphism of $X$ is birational to at most finitely many other fibers.
\end{remark}

\subsection{MRCC fibrations of varieties with nef anticanonical divisors}\label{subsec: MRCC}

\begin{lemma}\label{lem: Alb for MRCC fibration}
Let $f:X \dto T$ be an almost holomorphic projective fibration between projective normal varieties with rational singularities. If there exists a non-empty open set $U\subset T$ such that $f^{-1}(U) \to U$ is a projective fibration whose fibers are rationally chain-connected. Then there exists a natural isomorphism $\Alb(X) \to \Alb(T)$.
\end{lemma}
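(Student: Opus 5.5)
We reduce to smooth models and use the fact that maps from rationally chain-connected varieties to abelian varieties are constant. First, choose a resolution $p: W \to X$ that also resolves the indeterminacy of $f$, so that $q = f \circ p: W \to T$ is a morphism. Choose also a resolution $T' \to T$. After further blowing up $W$ we may assume $q$ lifts to a morphism $q': W \to T'$. Since $X$ and $T$ have rational singularities, Proposition \ref{prop: albanese}(1) gives natural isomorphisms $\Alb(W) \simeq \Alb(X)$ and $\Alb(T') \simeq \Alb(T)$. It therefore suffices to show that the pushforward $\Alb(W) \to \Alb(T')$ induced by $q'$ (via the universal property applied to $a_{T'} \circ q'$) is an isomorphism. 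The resulting map is natural because it comes from universal properties and is independent of the choices up to these canonical identifications.

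To construct the inverse, consider $a_W : W \to \Alb(W)$. Over $U$, the fibers of $f$ are rationally chain-connected. Restricted to a general fiber $W_t$ of $q'$ over a point of (the preimage of) $U$, the map $p$ is birational onto the fiber $X_t$. By the remarks in Section \ref{sec: pre}, rational chain-connectedness is preserved on resolutions of klt varieties and rational connectedness is birationally invariant. Some care is needed here: $X_t$ need not be klt. The safer route is to argue directly on $X$. Any rational curve in $X_t$ lifts birationally to a curve in $W$ whose image in $\Alb(W)$ is a rational curve, hence a point, since abelian varieties contain no rational curves. Chains therefore map to points.

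This argument shows that $a_W$ contracts $p^{-1}(x)$-compatible chains. More precisely, the rational map $a_W \circ p^{-1}: X \dto \Alb(W)$ is a morphism, because $X$ has rational singularities and the fibers of a resolution are rationally chain-connected by Hacon–McKernan, or simply because $\Alb(W)\simeq\Alb(X)$. This morphism is constant on the general fibers $X_t$ for $t\in U$. Hence $a_X|_{f^{-1}(U)}$ factors through $U$ by rigidity, using normality of $U$ and the fact that $f_U$ is a fibration. This yields a rational map $T \dto \Alb(X)$. By Proposition \ref{prop: albanese}(2) and the universal property of $a_{T'}$, it factors through $\Alb(T')\simeq\Alb(T)$, producing a morphism $\Alb(T) \to \Alb(X)$.

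Finally, check that the two maps are mutually inverse. This follows from the uniqueness in the universal properties. The composite $\Alb(X) \to \Alb(T) \to \Alb(X)$ is compatible with $a_X$ on the dense open $f^{-1}(U)$, hence is the identity. Similarly, $\Alb(T)\to\Alb(X)\to\Alb(T)$ agrees with $a_T$ on $U$, using that $f_U$ is surjective. The main obstacle is the descent step: showing that $a_X$ restricted to $f^{-1}(U)$ factors through $U$. I would handle it using that $f_U$ is a projective fibration with connected fibers ($f_*\mathcal O=\mathcal O$), so a morphism constant on fibers descends. Constancy on fibers comes from rational chain-connectedness together with the absence of rational curves in abelian varieties.
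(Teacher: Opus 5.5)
Your argument is correct and follows essentially the same route as the paper. You obtain $\Alb(X)\to\Alb(T)$ from the universal property, making explicit through the smooth models $W$ and $T'$ what the paper packages as Proposition~\ref{prop: albanese} applied to $X_U$ and $U$. You get the inverse by descending $a_X|_{f^{-1}(U)}$ along the fibration $f_U$, since any morphism to an abelian variety contracts rational chains, and you conclude by uniqueness.

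Two asides are not justified, though neither is needed. First, the appeal to Hacon--McKernan does not hold for merely rational singularities: a cone over an Enriques surface is a rational singularity whose exceptional divisor is not rationally chain-connected. Second, the ``birational lift'' of rational curves to $W$ is unsupported. Your alternative reason, $\Alb(W)\simeq\Alb(X)$, already suffices, since it lets you work directly with the morphism $a_X$ on $X$.
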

\begin{proof}
    Let $X_U \coloneqq f^{-1}(U)$. By Proposition \ref{prop: albanese}, the natural morphisms $a_X|_{X_U}: X_U \to \Alb(X)$ and $a_T|_U: U \to \Alb(T)$ are Albanese maps for $X_U$ and $U$ respectively. Therefore, there exists unique morphism $h: \Alb(X) \to \Alb(T)$ such that $h \circ a_X|_{X_U} = a_T|_U\circ f|_{X_U}$. As fibers of $f|_{X_U}$ are rationally chain-connected, each fiber of $f|_{X_U}$ is contracted by $a_X|_{X_U}$. Therefore, there exists a morphism $g: U \to \Alb(X)$ such that $g \circ f|_{X_U} = a_X|_{X_U}$. Applying the universal property of the Albanese map $a_T|_{U}$, there exists a unique morphism $u: \Alb(T) \to \Alb(X)$ such that $u\circ a_T|_{U}=g$. By the uniqueness property, we have $u \circ h = \Id_{\Alb(X)}$ and $h \circ u = \Id_{\Alb(T)}$.
\end{proof}

We now establish the homogeneity of MRCC fibrations as stated in Theorem \ref{thm: product of MRCC}.

    \begin{proof}[Proof of Theorem \ref{thm: product of MRCC}] 
        By Theorem \ref{thm: locally constant fibration of MRCC}, there exist a normal projective variety $X'$ and a finite quasi-\'etale cover $\psi: X' \to X$ such that the MRCC fibration $X' \to Y$ is locally constant. Hence, each fiber of $\psi$ is a rationally connected variety. This implies $\Alb(Y) \simeq\Alb(X')$ by Lemma \ref{lem: Alb for MRCC fibration}. Moreover, $Y$ is a projective klt variety with $K_Y \equiv 0$ by Theorem \ref{thm: locally constant fibration of MRCC}. Thus, $a_Y$ is a locally constant fibration (see \cite{Xu20} or Theorem \ref{thm: locally constant fibration of Albanese morphism}). 
        
        Just as \eqref{eq: commutative diagram}, we have the following commutative diagrams.
             \[
\begin{tikzcd}
T_{\Aut^0(X'),e} \arrow{r}{\simeq} \arrow{d}& H^0(X', T_{X'}) \arrow{d}{T_{\psi}}\\
 T_{\Aut^0(Y),e}   \arrow{r}{\simeq} & H^0(Y, T_{Y})
\end{tikzcd} \quad
\begin{tikzcd}
T_{\Aut^0(Y),e} \arrow{r}{\simeq} \arrow{d}& H^0(Y, T_{Y}) \arrow{d}{T_{a_Y}}\\
 T_{\Alb(Y),e}   \arrow{r}{\simeq} & H^0(\Alb(Y), T_{\Alb(Y)}).
\end{tikzcd}
\] By Lemma \ref{lem: lift vector field}, $T_{\psi}$ and $T_{a_Y}$ are surjective maps. Therefore, the natural group homomorphisms 
        \[
        \Aut^0(X') \to \Aut^0(Y), \quad \Aut^0(Y) \to \Aut^0(\Alb(Y)) = \Alb(Y)
        \] are also surjective. 
        
        As $Y$ is not a uniruled variety, $\Aut^0(Y)$ is an abelian variety (for example, see \cite[Proposition 4.6]{Amb05} or \cite[Lemma 3.3]{Xu20}). By Lemma \ref{lem: same as Alb}, there exists a finite subgroup $K \subset \Aut^0(Y)$ such that $a_Y$ can be identified with the natural $\Aut^0(Y)$-equivariant morphism
        \[
        \Aut^0(Y) \times^K S \to \Aut^0(Y)/K,
        \] where $S$ is the fiber over the identity element of $\Alb(Y)$. Note that $K$ is the kernel of the natural group homomorphism $\Aut^0(Y) \to \Alb(Y)$. For the same reason, there exists a normal subgroup $H \subset \Aut^0(X')$, with $\Aut^0(X')_{\rm aff} \subset H $ and $H/\Aut^0(X')_{\rm aff}$ a finite group, such that the Albanese morphism $a_{X'} = a_Y \circ \psi$ can be identified with the natural $\Aut^0(X')$-equivariant morphism
        \[
        \Aut^0(X') \times^H F \to \Aut^0(X')/H,
        \] where $F$ is the fiber over the identity element of $\Alb(X')$. Here, $H$ is the kernel of the natural group homomorphism $\Aut^0(X') \to \Alb(X')=\Alb(Y)$. From the natural group homomorphisms
        \[
        \Aut^0(X') \to \Aut^0(Y) \to \Alb(X')=\Alb(Y),
        \] we see that $H$ is the preimage of $K$.

        Finally, by the constructions of $F$ and $S$, we have $\psi|_F: F \to S$. Therefore, there exists a natural $\Aut^0(X')$-equivariant morphism
        \[
        \varepsilon: \Aut^0(X') \times^H F \to \Aut^0(Y) \times^K S
        \] over $\Alb(Y)$. Moreover, $\varepsilon$ is the same as $\psi_F$ over the identity element of $\Alb(Y)$. On the other hand, as $\psi$ is an $\Aut^0(X')$-equivariant morphism under the same group homomorphism $\Aut^0(X') \to \Aut^0(Y)$, the fact that $\varepsilon$ and $\psi$ can be identified over the identity element of $\Alb(Y)$ implies that $\varepsilon$ and $\psi$ can be identified as the same $\Aut^0(X')$-equivariant morphism.

        When $X$ is smooth and $\De=0$, the last claim follows from Theorem \ref{thm: locally constant fibration of MRCC}.
    \end{proof}

\begin{remark}
    By the example in \cite[Remark 4.10]{MW23}, it is necessary to pass to a finite quasi-\'etale cover in order to obtain a locally constant fibration. 
    \end{remark}

    \begin{remark}
    \cite[Theorem 1.1]{BFPT24} also provides the example that Theorem \ref{thm: product of MRCC} fails even in the birational sense for lc singularities. To be precise, \cite[Theorem 1.1]{BFPT24} constructs a variety $X$ with lc singularities and $K_X \sim 0$. The Albanese morphism $a_X: X \to E$ is a fibration to an elliptic curve such that each fiber of $a_X$ is birational to at most finitely many other fibers. Moreover, a general fiber of $a_X$ is a rationally connected variety. This follows from the construction (see \cite[\S 4.2]{BFPT24}) and the fact that rational connectedness is preserved under birational equivalence \cite[(IV. (3.3.3))]{Kol96}). Therefore, $a_X$ is also an MRCC fibration of $X$. By \cite[Theorem 1.1 (3)]{BFPT24} , the natural homomorphism $\pi_1(X_{\rm reg}) \to \pi_1(E)$ is an isomorphism. Hence, if $Y \to X$ is a finite quasi-\'etale morphism, then it is induced from an \'etale base change $C \to E$ of $a_X$. Therefore, $C$ is an elliptic curve and $a_Y: Y \to C$ remains an MRCC fibration of $Y$. Moreover, each fiber of $a_Y$ is birational to at most finitely many other fibers, as this property also holds for $a_X$.
\end{remark}

\section{Albanese morphisms and Chevalley decompositions}\label{sec: Albanese}

This section aims to prove \ref{thm: Chev for albanese} which establishes a version of \cite[Theorem 2]{Mat63} using the framework of modern algebraic geometry over complex numbers.

Let $X$ be a projective variety with $G \subset \Aut^0(X)$ a connected algebraic group. Then, by the universal property of Albanese morphisms, we have a natural morphism $\Alb(G) \times \Alb(X) \to \Alb(X)$ such that the following diagram commutes
\begin{equation}\label{eq: commutative of alb}
    \begin{tikzcd}
G \times X \arrow{r} \arrow{d}{a_{G} \times a_X}& X \arrow{d}{a_X}\\
\Alb(G) \times \Alb(X) \arrow{r}{\beta}& \Alb(X),
\end{tikzcd}  
\end{equation}
where $G\times X \to X$ is the action of $G$ on $X$. One can check that there is a group homomorphism 
\begin{equation}\label{eq: group hom}
  \varphi: \Alb(G) \to \Alb(X)  
\end{equation}
such that
\begin{equation}\label{eq: construction of group hom}
    \beta(g,z) = \varphi(g)+z \text{~for~} g\in \Alb(G), z\in \Alb(X).
\end{equation} 
See \cite[\S 3]{Bri10} for details.

\begin{theorem}[{\cite[Theorem 2]{Mat63}}]\label{thm: Mat63}
Under the above notation, $\Ker \varphi$ is a finite group. In other words, $G \to \Alb(G) \to \Alb(X)$ is the Chevalley decomposition of $G$ up to an isogeny.   
\end{theorem}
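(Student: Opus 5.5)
The plan is to deduce the finiteness of $\Ker \varphi$ from Brion's generalization, Theorem \ref{thm: group action}, using only the universal property of $a_X$ and the rigidity of morphisms between abelian varieties. By Theorem \ref{thm: group action}(2), there is a $G$-equivariant homogeneous fibration $f: X \to A$ for which the induced action of $A(G)=\Alb(G)$ on $A$ is by translations through a homomorphism $\psi: \Alb(G) \to A$ that is an isogeny. The idea is to show that $\psi$ factors through $\varphi$, i.e. $\psi = h_0\circ\varphi$ for a homomorphism $h_0:\Alb(X)\to A$. Then $\Ker\varphi \subset \Ker\psi$, and the latter is finite.

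First, by the universal property of $a_X$ there is a unique morphism $h: \Alb(X) \to A$ with $f = h\circ a_X$. By rigidity, $h$ is a homomorphism $h_0$ followed by a translation, say $h(z)=h_0(z)+c$. Fix a point $x\in X$ and $g\in G$, and compute $f(gx)$ in two ways.
\begin{itemize}
\item Using the equivariance of $f$ together with Theorem \ref{thm: group action}(1),(2), we get $f(gx)=\psi(a_G(g))+f(x)$.
\item Using $f=h\circ a_X$, the diagram \eqref{eq: commutative of alb} and formula \eqref{eq: construction of group hom}, we get
\[
f(gx)=h\bigl(\varphi(a_G(g))+a_X(x)\bigr)=h_0(\varphi(a_G(g)))+h(a_X(x)).
\]
\end{itemize}
Since $h(a_X(x))=f(x)$, comparing the two expressions gives $\psi(a_G(g))=h_0(\varphi(a_G(g)))$ for all $g\in G$.

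Second, we pass from the image of $a_G$ to all of $\Alb(G)$. The Albanese morphism $a_G: G\to \Alb(G)$ of a connected algebraic group may be normalized so that $a_G(e)=0$; it is then a surjective homomorphism, namely the quotient by $G_{\rm aff}$ in the Chevalley sequence. Hence $\psi=h_0\circ\varphi$ holds on all of $\Alb(G)$, which forces $\Ker\varphi\subset\Ker\psi$. As $\psi$ is an isogeny, $\Ker\varphi$ is finite. Finally, the composition $G\to\Alb(G)\to\Alb(X)$ has kernel containing $G_{\rm aff}$, and this kernel modulo $G_{\rm aff}$ is $\Ker\varphi$, which is finite. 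This is exactly the statement that the composition is a Chevalley decomposition up to an isogeny.

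The main obstacle I expect is bookkeeping the normalizations. One must make sure that the action of $\Alb(G)$ on $A$ in Theorem \ref{thm: group action} is genuinely given by the homomorphism $\psi$, rather than by a map that differs from a homomorphism by a constant. One must also check that the translation parts of $h$ and of $\beta$ cancel, which is why the identity $h(a_X(x))=f(x)$ is used explicitly. Setting $g=e$ in both expressions pins down these constants. The remaining steps are formal.
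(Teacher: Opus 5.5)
Your argument is correct, but it takes a different route from the paper's.

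\textbf{Your route.} You apply Theorem \ref{thm: group action}(2) and factor the $G$-equivariant homogeneous fibration $f\colon X\to A$ directly through $a_X$. The two-way computation of $f(gx)$ then gives $\psi=h_0\circ\varphi$, so $\Ker\varphi\subset\Ker\psi$ is finite. Your handling of the constants is right: the translation part of $h$ cancels because you use $h(a_X(x))=f(x)$, and $\psi$ is automatically a homomorphism because it comes from an action.

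\textbf{The paper's route.} The paper instead goes through the Stein factorization $X\xrightarrow{p}B\to\Alb(X)$.
\begin{enumerate}
\item Using Lemma \ref{lem: composition of Blanchard}, it shows that the Blanchard homomorphism $\theta\colon G\to\Aut^0(B)$ satisfies $\Ker\theta\subset H$.
\item Since $B$ is not uniruled, $\Aut^0(B)$ is an abelian variety, so $G_{\rm aff}\subset\Ker\theta$.
\item In Claim \ref{claim: finite kernel} it proves separately that $\xi\colon\Aut^0(B)\to\Alb(X)$ has finite kernel. This uses that stabilizers are finite on $B\simeq\Aut^0(B)\times^K F$ and that $B\to\Alb(X)$ is finite.
\item Only then does it identify $\xi\circ\tilde\theta$ with $\varphi$, by the same diagram comparison you use.
\end{enumerate}

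\textbf{What each buys.} Your route is shorter and needs none of the auxiliary facts about $B$. It shows that Theorem \ref{thm: Mat63} is a formal consequence of Theorem \ref{thm: group action}(2) together with the universal property of $a_X$. This is worth keeping in mind alongside Remark \ref{rmk: Mat63}(2). The paper's detour buys the Blanchard-lemma formulation, Theorem \ref{thm: Chev for albanese}(1), and its equivalence with Theorem \ref{thm: Mat63}, which is the real aim of that section.

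One caveat applies to both arguments: invoking Theorem \ref{thm: group action} tacitly requires $X$ to be normal, whereas the statement only assumes $X$ projective.
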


 While Theorem \ref{thm: Chev for albanese} is slightly different from \cite[Theorem 2]{Mat63} (it is about the Stein factorization $B$, and the map comes from the Blanchard's lemma), we will show that this version is equivalent to \cite[Theorem 2]{Mat63}.

\begin{lemma}\label{lem: composition of Blanchard}
Let $p: X \to B$ and $q: B \to Y$ be fibrations between normal projective varieties. Let $b_p: \Aut^0(X) \to \Aut^0(B)$, $b_q: \Aut^0(B) \to \Aut^0(Y)$, and $b_{q\circ p}: \Aut^0(X) \to \Aut^0(Y)$ be the natural group homomorphisms induced by $p$, $q$, and $q \circ p$, respectively, as in Blanchard's lemma (see Lemma \ref{lem: Blanchard}). Then, we have $b_{q\circ p} = b_q \circ b_p$.
\end{lemma}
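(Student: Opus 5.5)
The plan is to use the uniqueness part of Blanchard's lemma (Lemma \ref{lem: Blanchard}). Write $G=\Aut^0(X)$, let $\sigma_X\colon G\times X\to X$ be the tautological action, and let $\sigma_B\colon G\times B\to B$ be the action obtained by composing $b_p$ with the tautological action of $\Aut^0(B)$ on $B$. By construction $p$ is $G$-equivariant for $\sigma_X$ and $\sigma_B$. Similarly, let $\sigma_Y\colon G\times Y\to Y$ be the action obtained by composing $b_q\circ b_p$ with the tautological action of $\Aut^0(Y)$. Since $q$ is $\Aut^0(B)$-equivariant for the action induced by $b_q$, restricting along $b_p\colon G\to\Aut^0(B)$ shows that $q$ is $G$-equivariant for $\sigma_B$ and $\sigma_Y$. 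Concretely, for any $T$-point $g$ of $G$ we have $q\circ b_p(g)=b_q(b_p(g))\circ q$.

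Composing the two equivariance statements shows that $q\circ p\colon X\to Y$ is $G$-equivariant for $\sigma_X$ and $\sigma_Y$. The composite $q\circ p$ is a projective fibration: it is projective, and $(q\circ p)_*\Oo_X=q_*p_*\Oo_X=q_*\Oo_B=\Oo_Y$. Blanchard's lemma then asserts that there is a \emph{unique} $G$-action on $Y$ making $q\circ p$ equivariant, and $b_{q\circ p}$ is by definition the homomorphism $G\to\Aut^0(Y)$ classifying that action. Hence the action classified by $b_q\circ b_p$ coincides with the one classified by $b_{q\circ p}$.

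The last step passes from equality of actions to equality of homomorphisms. An action of $G$ on $Y$ is the same as a morphism of group functors $G\to\Aut(Y)$, i.e. a compatible family of homomorphisms $G(T)\to\Aut_T(Y\times T)$. Since $G$ is connected, this family lands in $\Aut^0(Y)$, and two homomorphisms of group schemes $G\to\Aut^0(Y)$ inducing the same action are therefore equal by Yoneda. So $b_{q\circ p}=b_q\circ b_p$.

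The only point needing care is that the uniqueness statement must be applied scheme-theoretically, to the action morphism $G\times Y\to Y$ rather than pointwise. This matters because $\Aut^0(X)$ might a priori be non-reduced. Over $\Cc$, however, group schemes of finite type are reduced (Cartier), so one can argue on $\Cc$-points: for each $g\in G(\Cc)$, both $b_{q\circ p}(g)$ and $b_q(b_p(g))$ are automorphisms $\tau$ of $Y$ satisfying $\tau\circ q\circ p=q\circ p\circ g$. Since $q\circ p$ is surjective, such a $\tau$ is uniquely determined. Two morphisms between reduced varieties that agree on closed points are equal, which finishes the argument.
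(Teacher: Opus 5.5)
Your proof is correct and follows essentially the same route as the paper. The paper checks on closed points that $b_q(b_p(g))$ and $b_{q\circ p}(g)$ both send $y$ to $(q\circ p)(g(X_y))$, which is exactly your observation that both satisfy $\tau\circ q\circ p=q\circ p\circ g$ and are therefore pinned down by surjectivity. It then concludes because morphisms of algebraic groups over the complex numbers that agree on closed points are equal; your scheme-theoretic version via the uniqueness clause of Blanchard's lemma is the same argument and simply avoids needing reducedness.
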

\begin{proof}
    Choose $g\in \Aut^0(X)$ and $y\in Y$. Set $g_B= b_p(g)$, $g_Y=b_{q\circ p}(g)$ and $h_Y=b_q(b_p(g))=b_q(g_B)$. Then, $g_Y(y) = (q\circ p)(g(X_y))$, where $X_y$ is the fiber of $q\circ p$ over $y$. On the other hand, $h_Y(y) = q(g_B(B_y))$, where $B_y$ is the fiber of $q$ over $y$. Let $b\in B_y$ be a point, then $g_B(b)=p(g(X_b))$, where $X_b$ is the fiber of $p$ over $b$. By construction, $X_b \subset X_y$, thus
    \[
    h_Y(y) = q(g_B(B_y))= q(p(g(X_b)))=(q\circ p)(g(X_y))=g_Y(y).
    \] This shows $h_Y = g_Y$. Therefore, we have $b_{q\circ p} = b_q \circ b_p$ as both sides are group homomorphisms of algebraic groups over $\Cc$.
\end{proof}

\begin{proof}[Proof of Theorem \ref{thm: Chev for albanese}]
By Theorem \ref{thm: group action}, there exist a normal subgroup $H \subset G$, with $G_{\rm aff} \subset H$ and $H/G_{\rm aff}$ a finite group, and a $G$-equivariant fibration
\[
\pi: X \simeq G \times^H F \to G/H,
\] where $F$ is the fiber over the identity element of $G/H$. As $G/H$ is an abelian variety, by the universal property of the Albanese morphism $a_X$, $\pi$ factors through $A$. As $X \xrightarrow{p} B \to A$ is the Stein factorization of $a_X$, there exists a morphism $q: B \to G/H$ such that $\pi= q \circ p$. As $\pi$ and $p$ are fibrations, $q$ is also a fibration. Applying Lemma \ref{lem: composition of Blanchard}, we have natural group homomorphisms
\[
b_\pi = b_q \circ b_p: \Aut^0(X) \to \Aut^0(B) \to \Aut^0(G/H)=G/H.
\] As $\theta = b_p|_{G}$ and $b_\pi (G) = G/H$, we have $\Ker(\theta) \subset H$. As $B$ is not a uniruled variety, $\Aut^0(B)$ is an abelian variety. Therefore, we have $G_{\rm aff} \subset \Ker(\theta)$. As $H/G_{\rm aff}$ is a finite group, $\Ker(\theta)/G_{\rm aff}$ is also a finite group. In other words, $G \to  {\rm Im}(\theta)$ is the Chevalley decomposition of $G$ up to an isogeny.

Next, we show that the above result is equivalent to \cite[Theorem 2]{Mat63} (i.e., Theorem \ref{thm: Mat63}). By the universal property of Albanese morphisms, $h: B \to A$ is also the Albanese morphism of $B$. As $\Aut^0(B)$ is an abelian variety, there exists a natural group homomorphism 
 \[
 \xi: \Aut^0(B) \simeq \Alb(\Aut^0(B)) \to A
 \] as explained in \eqref{eq: group hom}.
 
 \begin{claim}\label{claim: finite kernel}
  The kernel of $\xi: \Aut^0(B) \to A$ is finite.
 \end{claim}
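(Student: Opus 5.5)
We prove Claim \ref{claim: finite kernel} by relating $\xi$ to the Blanchard homomorphism $b_h\colon \Aut^0(B)\to \Aut^0(A)\simeq A$, where $h\colon B\to A$ is the Albanese morphism of $B$, and then bounding the kernel geometrically.

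\emph{Step 1: identify $\xi$ with $b_h$ up to a constant.} For $g\in \Aut^0(B)$ and $b\in B$ we have, by \eqref{eq: commutative of alb} and \eqref{eq: construction of group hom},
\[
h(g\cdot b)=\xi(g)+h(b).
\]
Since $h$ is equivariant for translation by $\xi(g)$ on $A$, the uniqueness in Blanchard's lemma applies once we know $h$ is a fibration onto its image. In general we only know that $h$ is finite onto its image $h(B)$, since $B\to A$ is the finite part of a Stein factorization. So I will argue directly with the displayed formula rather than invoking Lemma \ref{lem: Blanchard}.

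\emph{Step 2: show $\Ker\xi$ is finite.} Let $g\in (\Ker\xi)^0$. Then $h(g\cdot b)=h(b)$ for all $b\in B$, so $g$ preserves every fiber of $h\colon B\to h(B)$. These fibers are finite, so the orbit of $b$ under the connected group $(\Ker\xi)^0$ is a connected subset of a finite set, hence a point. Thus $(\Ker\xi)^0$ fixes every point of $B$. Since $(\Ker\xi)^0\subset \Aut^0(B)$ acts faithfully, $(\Ker\xi)^0$ is trivial. Hence $\Ker\xi$ is a closed subgroup with trivial identity component, i.e.\ finite. Here $\Ker\xi$ is taken as a reduced group over $\Cc$, so point-wise reasoning suffices in characteristic zero.

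\emph{Main obstacle.} The delicate point is making the formula $h(g\cdot b)=\xi(g)+h(b)$ rigorous. One must apply the construction of \eqref{eq: group hom} to $G=\Aut^0(B)$ acting on $B$, with $\Alb(\Aut^0(B))=\Aut^0(B)$ because $\Aut^0(B)$ is abelian. One must also check that $h=a_B$, which holds since $B$ is normal and $X\to B$ is a fibration, so morphisms from $X$ to abelian varieties factor through $B$. Once this is in place, the finiteness of the fibers of $h$ makes the kernel argument immediate.

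For the remainder of the equivalence, I expect to use the following: combining Claim \ref{claim: finite kernel} with the equality $\varphi=\xi\circ\theta$ (induced by $\Alb(G)\to\Aut^0(B)$) shows that $\Ker\varphi$ is finite exactly when $\Ker\theta/G_{\rm aff}$ is finite.
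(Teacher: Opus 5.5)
Your argument is correct, but it differs from the paper's at the key step.

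\textbf{The paper's route.} The paper first proves that every isotropy group $\Aut^0(B)_b$ is finite. It applies Theorem \ref{thm: group action} to the abelian variety $\Aut^0(B)$ acting on $B$, writes $B \simeq \Aut^0(B)\times^K F$ with $K$ finite, and uses commutativity of $\Aut^0(B)$ to bound $|\Aut^0(B)_b|\le |K|$. Then, if $\Ker(\xi)^0$ were nontrivial, its orbits would lie in the finite fibres $h^{-1}(h(b))$. The stabilizer of $b$ in the positive-dimensional group $\Ker(\xi)^0$ would then have finite index, hence be infinite, which is a contradiction.

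\textbf{Your route.} You use only that the orbit of $b$ under the connected group $(\Ker\xi)^0$ is irreducible and contains $b$. Since it lies in a finite fibre of $h$, it equals $\{b\}$. So every element of $(\Ker\xi)^0$ fixes all closed points of $B$, and is therefore the identity: the locus where it agrees with $\mathrm{id}_B$ is closed, contains all closed points, and $B$ is reduced. As $(\Ker\xi)^0\subset\Aut(B)$, it is trivial.

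\textbf{Comparison.} Your argument is shorter. The finiteness step needs neither Brion's structure theorem applied to $B$ (which requires $B$ normal) nor any stabilizer computation. The paper's argument gives more: all stabilizers of $\Aut^0(B)$ on $B$ are finite, of order at most $|K|$. The claim does not need this.

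Your Step 1 detour through Blanchard's lemma is unnecessary. The identity $h(g\cdot b)=\xi(g)+h(b)$, coming from \eqref{eq: commutative of alb} and \eqref{eq: construction of group hom}, is all that is used, by you and by the paper alike.
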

 \begin{proof}
 First, we show that the isotropy group $\Aut^0(B)_b$ of $b\in B$ is a finite group. As $\Aut^0(B)$ is an abelian variety, by Theorem \ref{thm: group action}, there exist a finite group $K\subset \Aut^0(B)$ and a natural $\Aut^0(B)$-equivariant morphism
 \[
 B \simeq \Aut^0(B) \times^K F \to \Aut^0(B)/K.
 \] By the definition of $\Aut^0(B) \times^K F $, $(gt, x) \sim (t,x)$ in $\Aut^0(B) \times F $ if and only if there exists an $s\in K$ such that $(gts,s^{-1}x)=(t,x)$. As $\Aut^0(B)$ is abelian, this implies $gts=t$, and thus $g=s^{-1}\in K$. For $b=[(g, x)]$, this means
 \[
 |\Aut^0(B)_{[(g,x)]}| \leq |K|<\infty.
 \]

 Suppose that $\Ker(\xi)$ is infinite, then $\Ker(\xi)^0$ is a non-trivial algebraic group. Recall that $h: B \to A$ is a finite morphism which is the Albanese morphism of $B$. By \eqref{eq: commutative of alb} and \eqref{eq: construction of group hom}, we have
 \[
 h(g\cdot b)=\xi(g)+h(b)=h(b)
 \] for $g\in \Ker(\xi)^0$ and $b\in B$. Thus, 
 \[
 \Ker(\xi)^0 \cdot b\subset h^{-1}(h(b))
 \] is a finite set. This contradicts the fact that $\Aut^0(B)_b$ is a finite set.
 \end{proof}

  We note that the following two group homomorphisms are the same: 
 \begin{equation}\label{eq: a}
    A(G) \xrightarrow{\ti \theta} \Aut^0(B) \xrightarrow{\xi} A, 
 \end{equation}
 where $\ti\theta$ is induced from $\theta: G \to \Aut^0(B)$, and
 \begin{equation}\label{eq: b}
 \nu: A(G) \to A,
 \end{equation}
 which is obtained as in \eqref{eq: group hom}. Indeed, \eqref{eq: a} is derived from the following commutative diagram 
 \[
\begin{tikzcd}
    G \times X \arrow{r}\arrow{d} & X\arrow{d}\\
    \Aut^0(B) \times B \arrow{r} \arrow{d}& B \arrow{d}\\
    \Aut^0(B) \times A \arrow{r}{\beta_1}& A,
\end{tikzcd}
 \] and \eqref{eq: b} is derived from the following commutative diagram 
  \[
\begin{tikzcd}
    G \times X \arrow{r}\arrow{d} & X\arrow{d}\\
    A(G) \times A \arrow{r}{\beta_2}& A.
\end{tikzcd}
 \] Let $\ti g =a_G(g)\in A(G)$. Hence, for $\ti g\in A(G)$ and $x\in X$, we have
 \[
 \beta_1\left((\ti\theta(\ti g), a_X(x))\right)=a_X(g\cdot x)=\beta_2\left((\ti g,  a_X(x))\right).
 \]  By \eqref{eq: construction of group hom}, we have
 \[
 \begin{split}
 &\beta_1\left((\ti\theta(\ti g), a_X(x))\right)=(\xi\circ\ti\theta)(\ti g)+a_X(x),\\
 &\beta_2\left((\ti g,  a_X(x))\right) = \nu(\ti g)+a_X(x).
 \end{split}
 \] This shows $\xi\circ\ti\theta = \nu$. By Claim \ref{claim: finite kernel}, we have
 \[
 [\Ker(\xi \circ \ti\theta)/G_{\rm aff}: \Ker(\ti\theta)/G_{\rm aff}] \leq |\Ker(\xi)|<\infty.
 \] This shows (2) in the theorem.
\end{proof}

\begin{remark}
The ``up to an isogeny" in the theorem is necessary. For example, consider a hyper-elliptic surface $X \coloneqq E \times F/\langle 1, \sigma \rangle$ where $E, F$ are elliptic curves and $\sigma \in E$ is a non-trivial torsion element of order $2$. Let $\sigma$ act on $E$ by translations and on $F$ by sending $y$ to $-y$. Then, $E= \Aut^0(X)$ acts naturally on the first factor of $E \times F/\langle 1, \sigma \rangle$. The natural projection
\[
E \times F/\langle 1, \sigma \rangle \to E/\langle 1, \sigma \rangle
\] is the Albanese morphism. Hence, the group homomorphism $\theta: \Aut^0(X) \to \Alb(X)$ is exactly the quotient map $E \to E/\langle 1, \sigma \rangle$. It is not the Chevalley decomposition of $E$ but a Chevalley decomposition of $E$ up to an isogeny.
\end{remark}

\begin{remark}\label{rmk: Mat63}
We have a list of remarks regarding this theorem.
\begin{enumerate}
    \item \cite[Theorem 2]{Mat63} also asserts that the theorem holds in positive characteristics. 

    \item Given \cite[Theorem 2]{Mat63}, there exists a sub-abelian variety $G/H \subset \Alb(X)$ isogeny to $G/G_{\rm aff}$. By Poincar\'e's complete reducibility theorem, there exists an isogeny $\Alb(X) \to G/G_{\rm aff} \times B$ with $B$ an abelian variety. Therefore, there exists a natural $G$-equivariant morphism $X \to G/G_{\rm aff}$. See discussions in \cite[\S 4, page 12]{Bri10}. \cite[Theorem 2]{Bri10} (i.e., Theorem \ref{thm: group action}) reproves this result in the language of modern algebraic geometry. However, \cite[Theorem 2]{Bri10} does not recover the original result \cite[Theorem 2]{Mat63}.

    \item On the other hand, in view of (2), our proof does not give a new proof of \cite[Theorem 2]{Bri10} as we use \cite[Theorem 2]{Bri10}.
    \end{enumerate}
\end{remark}

\section{Maximal rationally chain-connected fibrations  and Chevalley decompositions}\label{sec: MRCC}

This section aims to prove Theorem \ref{thm: aut in MRCC fibration}, which is an analogy of \cite[Theorem 2]{Mat63} and Theorem \ref{thm: Chev for albanese} for MRCC fibrations.

\begin{lemma}\label{lem: rational action on open set}
Let $X$ be a variety equipped with a regular action of an algebraic group $G$. Suppose that $U\subset X$ is an open set which is an $L$-variety for a normal algebraic subgroup $L\subset G$. Assume that $U/L$ exists as a good geometric quotient and the quotient morphism $q: U \to U/L$ is flat. Then $G$ acts rationally on $U/L$ through
\[
G \times U/L \dto U/L \quad (g, [Lx]) \mapsto [Lgx]. 
\]
Moreover, $G/L$ acts rationally on $U/L$ through
\[
G/L \times U/L \dto U/L \quad ([Lg], [Lx]) \mapsto [Lgx]. 
\]
In particular, for any open set $V \subset X$, $G$ acts rationally on $V$.
\end{lemma}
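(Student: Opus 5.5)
We will verify the four conditions of Definition \ref{def: rational action} directly, working with the action morphism $\sigma: G \times X \to X$. Set
\[
W \coloneqq \sigma^{-1}(U) \cap (G \times U) \subset G \times U,
\]
which is open. It is nonempty: it contains $\{e\} \times U$. The composite $W \xrightarrow{\sigma} U \xrightarrow{q} U/L$ is a morphism, so the main task is to descend it along $\Id_G \times q: G \times U \to G \times U/L$.

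First we check that $W$ is saturated for $L$ acting on the second factor. If $(g,x) \in W$ and $l \in L$, then $g(lx) = (glg^{-1})(gx)$. Since $L$ is normal, $glg^{-1} \in L$, and $U$ is $L$-stable, so $g(lx) \in U$; also $lx \in U$. Hence $(g,lx) \in W$. The same computation shows that $q\circ\sigma$ is constant on $L$-orbits, because $[L\,glx] = [L\,gx]$. This is exactly where normality of $L$ enters.

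Next, the quotient map $\Id_G \times q$ is flat, hence open, and it is a good geometric quotient for the $L$-action on $G \times U$, since good quotients are stable under flat base change. Therefore its image $U' \coloneqq (\Id_G\times q)(W)$ is open, and $W$ is its full preimage. By the universal property of the good quotient restricted to the saturated open set $W$, the map $q\circ\sigma|_W$ descends to a morphism $U' \to U/L$, given by $(g,[Lx]) \mapsto [Lgx]$. This gives condition (1).

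For condition (2), note that for each $g$ the slice $W_g = U \cap g^{-1}U$ is a nonempty open subset of $X$, since $X$ is irreducible. Hence $U'_g = q(W_g)$ is open and dense in $U/L$.

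For conditions (3) and (4), define $\phi(g)$ to be the rational map $[Lx] \mapsto [Lgx]$. It is birational with inverse $\phi(g^{-1})$, and $\phi(gh) = \phi(g)\phi(h)$ holds on the dense open set where both sides are defined, by associativity of $\sigma$. So $\phi$ is a homomorphism $G \to \Bir(U/L)$.

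For the $G/L$ statement, we use the flat quotient $G \to G/L$. The same saturation argument applies to the $L\times L$-action on $G\times U$, since $lg\,l'x = (lgl'g^{-1})\,gx$ again lies in $Lgx$. This lets us descend $U'$ further to an open subset of $G/L \times U/L$. Alternatively, we note that $L$ lies in the kernel of $\phi$ and descend the morphism along $G\times U/L \to G/L\times U/L$, using flatness of this map.

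The final assertion is the special case $L = \{e\}$ and $U = V$, where $q = \Id$ is trivially a flat good quotient.

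The main obstacle is the descent step: we must ensure that $W$ is $L$-saturated and that the good-quotient property persists after base change by $G$ and after restriction to $W$. Flatness of $q$ is what gives both openness of the image and the base-change compatibility, so this is where that hypothesis is used.
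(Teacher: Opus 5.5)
Your proposal is correct and follows essentially the same route as the paper: the same open set $\sigma^{-1}(U)\cap(G\times U)$ (the paper's $V$), its image under the flat, hence open, map $\mathrm{Id}_G\times q$, and the slices $q(U\cap g^{-1}U)$. You are more careful than the paper at the descent step. You make explicit the $L$-saturation of this open set, which is where normality of $L$ is used, and the good-quotient universal property after flat base change; the paper simply asserts both.
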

\begin{proof}
Consider the set
\[
W \coloneqq \{(g, g^{-1}U) \mid g\in G\} \subset G \times X.
\] Note that there exists an isomorphism
\[
G \times X \to G \times X: (g, x) \mapsto (g, g^{-1}x),
\] and $W$ is the image of $G \times U$ under this isomorphism. Hence, $W$ is an open set. Therefore, $V \coloneqq (G \times U) \cap W$ is an open set. As a flat morphism is universally open, $q_G: G \times U \to G \times U/L$ is an open map. Thus
\[
\ti V \coloneqq q_G(V)
\] is open in $G \times U/L$. Moreover, over $g\in G$, we have
\[
\ti V_g = (g, q(U \cap g^{-1} U)) \subset (g, U/L).
\]

Hence, for any $(g, q(x)) \in \ti V$ with $x\in U\cap g^{-1}U$, the natural action of $G$ on $X$ induces $g\cdot q(x)=q(g\cdot x) \in U/L$. This verifies  (1) in Definition \ref{def: rational action}. The remaining conditions of a rational action are straightforward to check.

As for the $G/L$-action, the same argument works with $\ti V$ replaced by $q_{G/L}(V)$, where $q_{G/L}=1_{G/L} \times q: G/L \times U \to G/L \times U/L$.
\end{proof}

\begin{lemma}\label{lem: action on RCC variety}
    Let $X$ be a normal projective variety that is rationally chain-connected. If a connected algebraic group $G$ acts on $X$ faithfully, then $G$ is a linear algebraic group.
\end{lemma}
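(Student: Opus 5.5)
Let $A(G)=G/G_{\rm aff}$ be the abelian quotient of $G$ from Chevalley's theorem. We must show $A(G)=0$. The plan is to apply Theorem \ref{thm: group action}(2) to the normal projective variety $X$ with $G\subset \Aut^0(X)$; faithfulness lets us regard $G$ as a subgroup of $\Aut^0(X)$. This gives a $G$-equivariant homogeneous fibration $f\colon X\to A$ with $A$ an abelian variety, such that the induced homomorphism $A(G)\to A$ is an isogeny. In particular $\dim A=\dim A(G)$, so it suffices to show $\dim A=0$.

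The key step is that every morphism from a rationally chain-connected projective variety to an abelian variety is constant. Any two points $x,y\in X$ are joined by a chain of rational curves $C_1,\dots,C_k$. Each $C_i$ is the image of some $\Pp^1\to X$, and the composite $\Pp^1\to X\to A$ is constant, because an abelian variety contains no rational curves. Hence $f(x)=f(y)$, so $f$ is constant. Since $f$ is a fibration, it is surjective, and therefore $A$ is a point.

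It follows that $\dim A(G)=0$. Since $G$ is connected, $A(G)$ is connected, so $A(G)$ is trivial and $G=G_{\rm aff}$ is a linear algebraic group.

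I expect no real obstacle. The only points needing care are that Theorem \ref{thm: group action} requires normality and a subgroup of $\Aut^0(X)$, both of which hold here, and that rational chain-connectedness is used only for closed points, which is enough because $f$ is a morphism of varieties.
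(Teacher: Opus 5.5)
Your proposal is correct and follows essentially the same route as the paper: apply Theorem \ref{thm: group action}(2) to get a $G$-equivariant fibration $X\to A$ with $A$ isogenous to $A(G)$, show that $A$ is a point because $X$ is rationally chain-connected, and conclude $G=G_{\rm aff}$. Your argument that rational curves map to points in an abelian variety, so $f$ is constant, simply spells out the paper's remark that $A$ inherits rational chain-connectedness and is therefore trivial.
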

    \begin{proof}
        By Theorem \ref{thm: group action}, there exists a fibration $X \to A$, where $A$ is isogeny to $\Alb(G)$. If $X$ is rationally chain-connected, then so is $A$. Since $A$ is an abelian variety, $A$ must be trivial. Thus, $\Alb(G)$ is also trivial. By the Chevalley decomposition of $G$, $G= G_{\rm aff}$ is a linear algebraic group.
    \end{proof}

The following result is the key ingredient to generalize Blanchard's lemma for almost holomorphic projective fibrations.

\begin{theorem}\label{theorem: rational action}
    Let $f: X \dto Z$ be an almost holomorphic projective fibration between normal projective varieties. Then there exists a natural rational action of $\Aut^0(X)$ on an open subset $V \subset Z$. 
    
    To be precise, suppose that $f_W: X_W \to W$ is a projective fibration for some open set $W \subset Z$ and $X_W=f^{-1}(W)$. Then, there exists an open set $V \subset W$ satisfying the property that if $g\in \Aut^0(X)$ and $X_z$ is the fiber over some $z\in V$ such that $g \cdot X_z \subset X_W$, then $g \cdot X_z = X_{g \cdot z}$ for some $g\cdot z \in W$.    
\end{theorem}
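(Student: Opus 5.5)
The plan is to transport the proof of Blanchard's lemma (Lemma \ref{lem: Blanchard}) to the open set $X_W$, combining it with a Hilbert scheme argument that parametrizes the fibres. Fix an open set $W\subset Z$ over which $f_W\colon X_W\to W$ is a projective fibration. Shrinking $W$, I will assume $W$ is smooth and $f_W$ is flat with reduced, irreducible fibres of dimension $m=\dim X-\dim Z$. The fibres $X_z$, $z\in W$, then form a flat family of closed subschemes of $X$ with a common Hilbert polynomial $P$ with respect to a fixed ample $A$ on $X$. This gives a morphism $\iota\colon W\to \mathrm{Hilb}^P(X)$, which is injective and, after shrinking, an immersion onto a locally closed subset $\iota(W)$.

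$G\coloneqq\Aut^0(X)$ acts regularly on $\mathrm{Hilb}^P(X)$ by $g\cdot[Y]=[gY]$, because $g^*A\equiv A$ for $g$ in the connected group and so the Hilbert polynomial is preserved. The desired action on $Z$ will be the rational action obtained by pulling back this regular action along $\iota$. Concretely, consider
\[
\Sigma\coloneqq\{(g,z)\in G\times W \mid g\cdot \iota(z)\in \iota(W)\}.
\]
I will show that $\Sigma$ contains a dense open subset $U$ of $G\times W$ of the form required in Definition \ref{def: rational action}. On $U$, the map $(g,z)\mapsto \iota^{-1}(g\cdot\iota(z))$ is a morphism, and the homomorphism property is inherited from the action on $\mathrm{Hilb}^P(X)$. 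To handle the passage from $W$ to $V$ and the density conditions, I will use Lemma \ref{lem: rational action on open set}, applied to the regular action on $\mathrm{Hilb}^P(X)$ and restricted to the open set $\iota(V)$ of the closure of $\iota(W)$.

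The main point is the second assertion: for $z\in V$ and $g\in G$ with $gX_z\subset X_W$, the subvariety $gX_z$ must itself be a fibre. Set $G'_z\coloneqq\{g\in G\mid gX_z\subset X_W\}$, which is open in $G$ and contains $e$. For $g\in G'_z$, the composite $f_W\circ g|_{X_z}$ is a morphism from the proper connected variety $X_z$ to $W$, so we get a morphism
\[
\Phi\colon G'_z\times X_z\to W
\]
that contracts $\{e\}\times X_z$.
\begin{itemize}
\item \emph{Openness.} By the rigidity lemma, the set of $g$ for which $\Phi$ contracts $\{g\}\times X_z$ is open in $G'_z$.
\item \emph{Closedness.} This set is also closed in $G'_z$, since the locus where the fibres of $G'_z\times X_z\to G'_z$ are mapped to points is closed (upper semicontinuity of the fibre dimension of $(\mathrm{pr}_1,\Phi)$).
\end{itemize}
Hence on the connected component of $G'_z$ containing $e$, each $gX_z$ lies in a single fibre $X_{w}$. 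Since both $gX_z$ and $X_w$ are irreducible of dimension $m$ with the same Hilbert polynomial, they are equal, so $gX_z=X_w$ and we may set $g\cdot z\coloneqq w$.

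The step I expect to be the main obstacle is that $G'_z$ need not be connected. To treat an arbitrary $g\in G'_z$, I would choose $V$ to be the set of $z\in W$ such that $\iota(z)$ is not contained in the image of the closed subset $\overline{\iota(W)}\setminus\iota(W)$ under the action map. The aim is that every point $g\cdot\iota(z)$ whose support lies in $X_W$ should then lie in the $G$-orbit closure of $\iota(W)$, and hence be of the form $\iota(w)$. If this approach fails, I would fall back on a numerical argument. Pass to a resolution $p\colon\tilde X\to X$ with $q=f\circ p$ a morphism, and take an ample $H$ on $Z$. Then $gX_z\sim_{\rm alg}X_z$, and $q^*H\cdot A^{m-1}\cdot[\text{strict transform of }X_z]=0$. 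Choosing $V$ inside the locus where strict transforms of fibres do not meet the exceptional locus of $p$, this vanishing should propagate to the strict transform of $gX_z$, which forces $f(gX_z)$ to be a point.

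Finally, naturality, i.e.\ compatibility with the morphism $b_f$ of Lemma \ref{lem: Blanchard} wherever $f$ is a genuine fibration, follows because both constructions send $z$ to the point whose fibre is $gX_z$.
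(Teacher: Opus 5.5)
\textbf{There is a genuine gap, and it is exactly the step you flag.} Your open/closed argument takes place in $G'_z=\{g : gX_z\subset X_W\}$ for a fixed $z$. So it only shows that $gX_z$ is a fibre when $g$ lies in the identity component of $G'_z$. As written, nothing guarantees that a given $g$ far from $e$ sends any fibre to a fibre. That is needed for the second assertion, and also for your claim that $\Sigma$ contains a $U$ with $U_g$ dense for every $g$, as Definition \ref{def: rational action} requires.

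Neither fallback works as stated.
\begin{itemize}
\item In (a), the fact that $g\cdot\iota(z)$ lies in the $G$-orbit closure of $\iota(W)$ is automatic. It does not yield $g\cdot\iota(z)\in\iota(W)$, nor even $g\cdot\iota(z)\in\overline{\iota(W)}$, whose $G$-stability is what has to be proved. Moreover, your $V$ can be empty. Take $X=E\times F$, $f=\mathrm{pr}_1$, $W=E\setminus\{0\}$: the translates of the boundary point $[\{0\}\times F]$ by $E\subset\Aut^0(X)$ sweep out all of $\iota(E)$.
\item In (b), $gX_z\sim_{\rm alg}X_z$ on $X$ gives no control of strict transforms on $\tilde X$. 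Along a path from $e$ to $g$ the translates may cross the indeterminacy locus of $f$, where strict transforms jump. So the vanishing of $q^*H\cdot p^*A^{m-1}$ does not propagate without a separate limit-cycle argument.
\end{itemize}

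\textbf{The missing idea} is to connect $e$ to $g$ through an irreducible parameter space rather than through $G'_z$.

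The paper does this with an irreducible curve $\gamma\subset\Aut^0(X)$ through $e$ and $g$, and the composite rational map $\gamma\times X_t\to X\dto Z$. Because it uses $f$ as a rational map on all of $X$, the intermediate translates $g'X_t$ may leave $X_W$. Rigidity gives contraction over a neighbourhood of $e$, and properness over $\gamma$ (via Stein factorization) propagates it along the whole curve. Hence $f$ contracts $gX_t\cap X_V$ whenever this is nonempty. In particular $U_g=f(X_V\cap g^{-1}X_V)$ is nonempty for every $g$ at no extra cost. Openness and regularity then come from flatness of $X_V\to V$ and the argument of \cite{BSU13}.

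Within your own setup there is a short repair: run your open/closed argument over
$\Omega=\{(g,z)\in\Aut^0(X)\times W : gX_z\subset X_W\}$ instead of over $G'_z$.
\begin{itemize}
\item $\Omega$ is open: its complement is the image of a closed subset of $\Aut^0(X)\times X_W$ under the proper map $\mathrm{id}\times f_W$. Hence $\Omega$ is irreducible and connected.
\item The family $\Omega\times_W X_W\to\Omega$ is proper with irreducible fibres. The locus where a fibre is contracted by $(g,x)\mapsto f_W(gx)$ is open by rigidity, closed by semicontinuity, and contains $\{e\}\times W$. So it is all of $\Omega$.
\item Nonemptiness of $\Omega_g$ for every $g$ then follows, for instance, by writing $g=g_1g_2$ with $g_1,g_2$ in the dense open image of $\Omega\to\Aut^0(X)$.
\end{itemize}
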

\begin{proof}
Let $G \coloneqq \Aut^0(X)$. Take an open set $V \subset W$ such that $X_{V} \to V$ is flat and each closed fiber over $V$ is normal. Consider the set 
\[
T \coloneqq \{(g, g^{-1}\cdot X_{V}) \mid g\in G\} \subset  G \times X.
\] As shown in Lemma \ref{lem: rational action on open set}, $T$ is an open set. Thus,
\[
U' \coloneqq (G \times X_{V}) \cap T
\] is still an open set such that $U'_g = (g, X_{V} \cap g^{-1}\cdot X_{V})$. Moreover, if $X_t \cap (X_{V} \cap g^{-1}\cdot X_{V}) \neq \emptyset$ for some $t\in V$, then 
\begin{equation}\label{eq: intersects non-empty}
(g\cdot X_t) \cap X_{V} \neq \emptyset.
\end{equation}   

\medskip

Step 1. First, we show that if $X_t \cap (X_{V} \cap g^{-1}\cdot X_{V}) \neq \emptyset$, then $f((g\cdot X_t) \cap X_{V})$ is a point (by \eqref{eq: intersects non-empty}, this makes sense). As $G$ is quasi-projective, there exists an irreducible curve $\gamma \subset G$ (may not be projective) such that $e, g \in \gamma$, where $e\in G$ is the identity element. Consider the following diagram, with the morphisms explained below.

  \[
    \begin{tikzcd}
    &&T \arrow{lld}[swap]{p}\arrow{d} \arrow{rd}{q}&\\
    \gamma \times X_t \arrow[hook]{r}\arrow{d}{{\rm pr}_1} & G \times X \arrow{r}& X \arrow[dashed]{d}{f}& B \arrow{ld}{\tau}\\
    \gamma \arrow{rr}{h}&&Z &
    \end{tikzcd}
    \] In the above diagram, $G \times X \to X$ is the regular action of $G$ on $X$. Let $T$ be a normal variety such that $T \to \gamma \times X$ and $T \to Z$ are morphisms such that the diagram is commutative (ignore $h, q$ at the moment). Moreover, the $T$ can be chosen such that $p$ is birational. To be precise, one first takes a $T'$ that resolves $f$, then as $f$ is defined on $X_t$, there exists a map $\gamma \times X_t \dto T'$. The $T$ can be taken to be the resolution of this map. In particular, $p$ is a fibration as $\gamma$ and $X_t$ are normal varieties. Let 
    \[
    T \xrightarrow{q} B \xrightarrow{\tau} Z
    \] be the Stein factorization of $\tau\circ q$. Note that $T \to X$ and $T \to Z$ may not be surjective.

    As $e \in \gamma$ and $X_t \subset X_V$, by the commutative of the diagram, we see that $q(T_e)$ is a point, where $T_e$ is the fiber of ${\rm pr}_1 \circ p$ over $e$ (this uses the fact that $T_e$ maps to a point in $Z$ and $T_e$ is connected). By \cite[Lemma 1.15 (a)]{Deb01}, there exists a Zariski open set $e\in O \subset \gamma$ and a morphism $O \to B$ making the diagram commutes. As $\gamma$ is a curve, this morphism extends to a morphism $\gamma \to B$. Composing with $B \to Z$, we have $h$. Therefore, we have
    \[
    \tau\circ q(T_g)=h(g),
    \] where $T_g$ is the fiber of ${\rm pr}_1\circ  p$ over $g\in \gamma$. 
    This implies that $f((g\cdot X_t) \cap X_{V})=h(g)$.

    \medskip

Step 2. In this step, we construct the open set $U \subset G \times V$ for the rational action of $G$ on $V$. Recall that in Step 1, we constructed an open set $U' \subset G \times X_{V}$ and $X_{V} \to V$ is a flat morphism. Thus, $\iota: G \times X_{V} \to G \times V$ is also flat by \cite[\href{https://stacks.math.columbia.edu/tag/01UA}{Lemma 01UA}]{stacks-project}. In particular, $\iota$ is an open map. Let
\[
U \coloneqq \iota(U') \subset G \times V,
\] then $U$ is an open subset. Moreover, $U_g= f (X_{V} \cap g^{-1}\cdot X_{V})$, hence $U_g$ is a non-empty open set of $V$ (as $X_{V} \cap g^{-1}\cdot X_{V}$ is a non-empty open set of $X_{V}$). Take any $(g, t)\in U$, we have $t\in U_g$ and thus $X_t \cap (X_{V} \cap g^{-1}\cdot X_{V}) \neq\emptyset$. By Step 1, $f((g\cdot X_t) \cap X_{V})$ is a point in $V$ which is denoted by $g\cdot t$. Thus, this gives a map
\begin{equation}\label{eq: action on set}
\varphi: U \to V, \quad (g, t) \mapsto g\cdot t.
\end{equation} 

Step 3. We show that \eqref{eq: action on set} is a morphism (i.e., (1) in Definition \ref{def: rational action}). We follow the process in the proof of \cite[Proposition 4.2.1]{BSU13}.

First, we show that $\varphi$ is continuous. We have the following commutative diagram.
\[
\begin{tikzcd}
U' \arrow{r}{\psi}\arrow{d}{\iota}& X_{V} \arrow{d}{f}\\
U \arrow{r}{\varphi}&V,
\end{tikzcd}
\] where $\psi$ is the rational action of $G$ on $X_{V}$ (see Lemma \ref{lem: rational action on open set}). Therefore, if $\Omega\subset V$ is an open set, then
\[
\varphi^{-1}(\Omega)= \iota\left(\psi^{-1}(f^{-1}(\Omega))\right)
\] is an open set as $\iota$ is an open map. 

Next, we show that there exists a natural morphism of sheaves of $\Oo_{V}$-modules
\[
\varphi^*: \Oo_{V} \to \varphi_*\Oo_{U}.
\] Take $\alpha(z) \in \Oo_V(\Omega)$, we have $f^*\alpha(z)=\alpha(f(x)) \in \Oo_X(X_\Omega)$. Thus
\[
\psi^*(f^*\alpha)(g,x)=\alpha(f(g\cdot x)) \in \Oo_{G \times X}(\psi^{-1}(X_{V})).
\] For any $(g, x) \in X_t$, it is shown in Step 1 that $f(g\cdot x) = g\cdot t$. Thus, $\alpha(f(g\cdot x))$ is a constant on $(g, X_t)$. This implies that $\alpha(f(g\cdot x))$ is the pull-back of a holomorphic function on $\varphi^{-1}(\Omega)$. As  
\[
\alpha(f(g\cdot x))=  \alpha(\varphi(\iota((g, x))))=\iota^*(\varphi^*(\alpha))(g,x),
\] we see that $\varphi^*(\alpha) \in \Oo_{G \times V}(\varphi^{-1}(\Omega))$. From this, it is straightforward to check that $\varphi^*$ is a morphism of sheaves of $\Oo_{V}$-modules. Hence, we have shown (1) in Definition \ref{def: rational action}. The remaining conditions in Definition \ref{def: rational action} are straightforward to check from the construction.
 \end{proof}

Now, we show the Blanchard's lemma for almost holomorphic projective fibrations.

\begin{proof}[Proof of Lemma \ref{lem: Blanchard for almost hol maps}]
By Theorem \ref{theorem: rational action}, there exists an open set $V \subset Z$ that equips with a rational action of $\Aut^0(X)$. By Theorem \ref{thm: Weil's theorem}, there exists a birational modification $T \dto V$, where $T$ is a smooth projective variety such that $\Aut^0(X)$ acts regularly on $T$. Therefore, by the functorial property of $ \Aut(T)$, we have a morphism of varieties
\[
\theta: \Aut^0(X) \to \Aut^0(T).
\] By the construction of $\theta$, it is straightforward to check that $\theta$ is a group homomorphism restricted to the closed points $ \Aut^0(X)(\Cc) \to \Aut^0(T)(\Cc)$. As the base field is $\Cc$, this shows that $\theta$ is a group homomorphism. Note that $X \dto T$ is still an MRCC fibration. It is compatible with Blanchard's lemma over the locus where $f$ is projective by the explicit description of group actions in Theorem \ref{theorem: rational action}.
\end{proof}

\begin{remark}\label{rmk: canonical choice of Z}
    It is possible to use Nakayama's Chow reduction (see \cite[Definition 4.15]{Nak10}) to construct a canonical $T$. Indeed, \cite[Theorem 4.18, Theorem 4.19]{Nak10} uses this to construct a special MRC fibration (but this MRC fibration is not the same as the MRCC fibration considered in this paper, see \cite[Remark after Theorem 4.18]{Nak10}). For the sake of simplicity and the explicit description of group actions, we stick to the above non-canonical construction of $T$.
\end{remark}

We require the following technical lemma before proceeding to the proof of Theorem \ref{thm: aut in MRCC fibration}.

\begin{lemma}\label{lem: faithful on general fiber}
Let $f: X \dto T$ be an almost holomorphic projective fibration which is a projective morphism over $U\subset T$. Let $K \subset \Aut(X)$ be a connected algebraic subgroup such that each fiber $X_t, t\in U$ is $K$-invariant. Then, $K$ acts faithfully on general fibers over $U$.
\end{lemma}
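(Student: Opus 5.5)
The plan is to study the kernels of the restricted actions and show that a nontrivial kernel on general fibers would force a nontrivial element of $K$ to act trivially on a dense subset of $X$. For $t\in U$, let $K_t\subset K$ be the kernel of the action of $K$ on $X_t$. This is a closed subgroup scheme of $K$, since it is the stabilizer of every point of $X_t$.

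Next we make the family of kernels algebraic. Consider the subscheme
\[
\mathcal{K}\coloneqq\{(k,t)\in K\times U \mid k\cdot x=x \text{ for all } x\in X_t\}\subset K\times U.
\]
It is closed, because it is the locus where the two morphisms $K\times X_U\to X_U$, namely $(k,x)\mapsto kx$ and $(k,x)\mapsto x$, agree on the whole fiber. Concretely, $\mathcal{K}$ is the complement of the image under the proper projection $K\times X_U\to K\times U$ of the open set where $kx\neq x$. Shrinking $U$ to an open set over which $\mathcal{K}\to U$ is flat, the fibers $K_t$ form an algebraic family of subgroups of $K$.

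Then we show the family is constant. Since $K$ is connected and preserves each fiber, every element of $K$ acts on $X_t$, and the kernel $K_t$ varies with $t$. The key point is to show that, for $t$ in a dense open set, $K_t$ equals the fixed subgroup $K_0\coloneqq\bigcap_{t\in U}K_t$. An element of $K_0$ acts trivially on $X_U$, which is dense in $X$, hence trivially on $X$. Because $K\subset\Aut(X)$, this forces $K_0=\{e\}$, and so $K$ acts faithfully on general fibers.

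The main obstacle is this constancy: the generic kernel could a priori be a nontrivial subgroup whose conjugate or moving versions have trivial intersection. To handle it, I would first try a countability argument. The set $\{t\in U \mid K_t\supseteq L\}$ is closed for each closed subgroup $L$. Taking the generic point of $U$ (or a very general $t$), let $L$ be the kernel of $K$ acting on the generic fiber. Then $L\subset K_t$ on a dense open set by the closedness of $\mathcal{K}$, since the generic fiber of $\mathcal K\to U$ is $L$ and flatness spreads it. In particular, $L$ acts trivially on $X_t$ for all $t$ in a dense open set, hence on a dense subset of $X$, and therefore on $X$. So $L=\{e\}$. It follows that the kernel on the generic fiber is trivial, and by upper semicontinuity of fiber dimension together with the flatness of $\mathcal{K}$ over an open set, the kernel $K_t$ is trivial, i.e.\ finite of degree one, for general $t$. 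Faithfulness on general fibers follows. One must make sure that "faithful" is read scheme-theoretically; in characteristic zero, reducedness of group schemes makes this automatic.
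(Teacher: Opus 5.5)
Your argument breaks at the obstacle you flag yourself, and the ``countability argument'' does not get past it. The generic kernel $L=\mathcal K_\eta$ is a subgroup scheme of $K\times_{\Cc}\operatorname{Spec}\Cc(U)$, not of $K$. It need not be defined over $\Cc$. So ``$L\subset K_t$ on a dense open set'' only says that spreading out $L$ gives back $\mathcal K$, whose fibers $K_t$ may move with $t$. Your correct observation $\bigcap_t K_t=\{e\}$ says nothing about an individual $K_t$.

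In fact no argument can close this gap, because the statement is false as written. Let $X=\mathrm{Bl}_p\Pp^2=\mathbb F_1$ with $p=[0:0:1]$, let $f\colon X\to\Pp^1$ be the ruling (so $U=T=\Pp^1$), and let $K\cong\mathbb G_a^2$ act by $[x:y:z]\mapsto[x:y:z+ax+by]$.
\begin{itemize}
\item These elations fix $p$, hence lift to $X$, and they preserve every line through $p$. So $K\subset\Aut(X)$ is a connected algebraic subgroup preserving every fiber.
\item Parametrize the fiber over $[\alpha:\beta]$ by $[s:z]\mapsto[\alpha s:\beta s:z]$. The action there is $[s:z]\mapsto[s:z+(a\alpha+b\beta)s]$, with kernel the line $\{a\alpha+b\beta=0\}$.
\item Thus $K$ is faithful on $X$ but on no fiber. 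Here $\mathcal K\to U$ is flat with generic fiber $\{a+b\eta=0\}$, which is not the base change of any subgroup of $K$. This is exactly the moving situation you worried about.
\end{itemize}
The same failure occurs in the setting where the paper applies the lemma. Take the $\Pp^1$-bundle compactifying the total space of a line bundle $L$ with $h^0(L)\ge 2$ on a curve $C$ of genus $\ge 1$, so that the projection to $C$ is the MRCC fibration. Then $K=H^0(C,L)$ acts by fiberwise translations, with kernel $\{\sigma:\sigma(t)=0\}\neq 0$ over every $t$.

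The paper's proof has the same defect in a different form. Its reduction ``as $K\subset\Aut^0(X_U/U)$, it suffices to show the claim for $\Aut^0(X_U/U)$'' uses only that $K(\Cc)$ injects into the $U$-automorphisms of $X_U$. Fiberwise faithfulness instead needs $K\times U\to\Aut^0(X_U/U)$ to be a monomorphism of $U$-group schemes, and that fails in the example above. For $\Aut^0(X_U/U)$ itself, fiberwise faithfulness is automatic.

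What your closedness and density observations do prove is a weaker, correct statement. Since $\bigcap_{t\in V}K_t=\{e\}$ for every dense open $V\subset U$, the descending chain condition on closed subgroups of $K$ gives general $t_1,\dots,t_m$ with $\bigcap_i K_{t_i}=\{e\}$. Hence $K\to\prod_i K/K_{t_i}$ is a closed immersion. This suffices for Theorem~\ref{thm: aut in MRCC fibration}: each $K/K_{t_i}$ acts faithfully on the rationally chain-connected fiber $X_{t_i}$, so it is linear by Lemma~\ref{lem: action on RCC variety}, and therefore $(\Ker\theta)^0$ is linear.
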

\begin{proof}
By assumption, $K$ can be identified with a connected subgroup of $\Aut(X_V/V)$ for any open subset $V\subset U$. Hence, it can also be identified with a connected subgroup of $\Aut(X_\eta)$, where $\eta$ is the generic point of $U$. By abuse of notation, we still use $K$ to denote the corresponding identifications. By the representability of automorphism group functor, we have $\Aut(X_U/U)_\eta = \Aut(X_\eta)$, and thus $\Aut^0(X_U/U)_\eta \supset \Aut^0(X_\eta)$, where $\Aut^0(X_U/U)$ denotes the identity component of $\Aut(X_U/U)$. Therefore, there exists an open subset $V\subset U$ such that $\Aut^0(X_{V'}/V')_\eta =\Aut^0(X_\eta)$ for any open subset $V'\subset V$ (see \cite[\href{https://stacks.math.columbia.edu/tag/055H}{Lemma 055H}]{stacks-project}). Replacing $U$ by $V$, we can assume that $\Aut^0(X_{V'}/V')_\eta =\Aut^0(X_\eta)$ for any open subset $V'\subset U$. As $K \subset \Aut^0(X_U/U)$, it suffices to show the claim for $\Aut^0(X_U/U)$. In the sequel, we set $K=\Aut^0(X_U/U)$. Hence, we have $K_\eta = \Aut^0(X_\eta)$.  

Let $\bar\eta$ be the geometric generic point of $U$. As $\Aut^0(X_\eta)_{\bar\eta}=\Aut^0(X_{\bar\eta})$ by \cite[Lemma 9.5.1 (3)]{Kle05}, we have $K_{\bar\eta}=\Aut^0(X_{\bar\eta})$. In particular, $K_{\bar\eta}$ acts faithfully on $X_{\bar\eta}$. Let $K_{\bar\eta, \bar x}$ be the stabilizer of the closed point $\bar x \in X_{\bar\eta}$. As 
\[
\bigcap_{\bar x\in X_{\bar\eta}} K_{\bar\eta, \bar x} = \{e\},
\] by the noetherian property of algebraic varieties, there are finitely many closed points $\bar x_1, \ldots, \bar x_m \in X_{\bar\eta}$ such that 
\[
\bigcap_{i=1}^m K_{\bar\eta, \bar x_i} = \{e\}.
\] Hence, there exist an open subset $V\subset U$, a finite base change $V' \to V$, and sections $S_i \subset X_{V'}, i =1, \ldots, m$ over $V'$ such that $S_{i,\bar\eta} = \bar x_i$. By the choice of $U$, we have $K_V = \Aut^0(X_V/V)$, where $K_V$ denotes the base change of $K$ through $V \to U$. As $\Aut^0(X_\eta)_{\bar\eta}=\Aut^0(X_{\bar\eta})$, after shrinking $V'$ and $V$, we have $K_{V'} = \Aut^0(X_{V'}/V')$. 

Let $S \coloneqq \cup_{i=1}^m S_i$ be the subscheme of $X_{V'}$ with reduced induced structure. Let $\Aut(X_V'/V', S)$ be the scheme that represents the automorphism group subfunctor of $X_{V'}$ over $V'$ which leaves $S$ invariant (see \cite[\S 2]{Xu20}). Then, for each $t\in V'$, we have
\begin{equation}\label{eq: invariant auto}
\Aut(X_{V'}/V', S)|_{X_t} \simeq \Aut(X_t, S|_{X_t}).
\end{equation} Let $K_{V', S}$ be the stabilizer of the subscheme $S$. 

\begin{claim}\label{claim: restriction}
We have
\[
K_{V', S}|_{X_t} = K_{V', s},
\] where $s= S|_{X_t}$ and $K_{V', s}$ is the stabilizer of $s$ for $K_{V'}$ acting on $X_t$.
\end{claim}
\begin{proof}[{Proof of Claim \ref{claim: restriction}}]
First, we have the natural inclusion $K_{V', S}|_{X_t} \subset K_{V', s}$. By \eqref{eq: invariant auto}, we have $K_{V', s}  \subset \Aut(X_{V'}/V', S)|_{X_t}$. Moreover, we have
\[
K_{V', s}  \subset K_{V'}|_{X_t} = \Aut^0(X_{V'}/V')|_{X_t} .
\] Therefore, we have
\[
K_{V', s}  \subset \left(\Aut^0(X_{V'}/V') \cap \Aut(X_{V'}/V', S)\right)|_{X_t}. 
\] As $K_{V', S} = \Aut^0(X_{V'}/V') \cap \Aut(X_{V'}/V', S)$, we have $K_{V', s} \subset K_{V', S}|_{X_t}$. This shows the claim.
\end{proof}

Finally, we have   
\[
\{e\} \subset K_{V', S}|_{\bar\eta} \subset \bigcap_{i=1}^m K_{\bar\eta, \bar x_i} = \{e\}.
\]  
Using Claim \ref{claim: restriction}, we conclude that  
\[
K_{V', s} = K_{V', S}|_{X_t} = \{e\}
\]  
for a general \( t \in V' \). In other words, $K_{V'}$ acts faithfully on the fiber $X_t$. As $X_{V'} \to X_V$ is obtained through the base change $V' \to V$, $K$ also acts faithfully on general fibers over $V$.
\end{proof}

Now, we show Theorem \ref{thm: aut in MRCC fibration} which reveals the relationship between Chevalley decompositions and MRCC fibrations.

\begin{proof}[Proof of Theorem \ref{thm: aut in MRCC fibration}]
First, by Lemma \ref{lem: Blanchard for almost hol maps}, Then there exist a variety $T$ that is birational to $Z$ and a natural homomorphism of algebraic groups
    \[
    \theta: \Aut^0(X) \to \Aut^0(T)
    \] which is compatible with Blanchard's lemma over the locus where $f$ is projective. Replacing $Z$ by $T$, we obtain the desired group homomorphism. 

Next, we show that $G \to \theta(G)$ is the Chevalley decomposition of $G$ up to an isogeny. As $T$ is not a uniruled variety (see \cite[Corollary 1.4]{GHS03}), $\Aut^0(T)$ is an abelian variety. Thus, $\theta(G)$ is also an abelian variety. Let $H = (\Ker \theta)^0$. By the construction of $\theta$, $\Ker \theta$ acts on general fibers $X_z, z\in T$. By Lemma \ref{lem: faithful on general fiber}, $H$ acts on $X_z$ faithfully for general $z\in T$. By the definition of MRCC fibrations, a general fiber $X_z$ is rationally chain-connected. Moreover, it is a normal variety as $X$ is normal. By Lemma \ref{lem: action on RCC variety}, $H$ is a linear algebraic group. Hence, if $\beta: G \to A(G)$ is the Chevalley decomposition of $G$, then $H \subset \Ker(\beta)$. Therefore, we have
\[
\Ker \theta/ \Ker \beta \simeq  \frac{\Ker \theta/ H}{\Ker \beta/ H}.
\] As $\Ker \theta/ H=\Ker \theta/ (\Ker \theta)^0$ is a finite group, $\Ker \theta/ \Ker \beta$ is a finite group. This shows the desired claim.
\end{proof}

\begin{remark}
The ``up to an isogeny" in the theorem is necessary. Consider the variety $X = E \times^K \Pp^1$ where $E$ is an elliptic curve and $K=\langle 1, \sigma \rangle\subset E$ is the subgroup generated by a torsion element $\sigma$ of order $2$. Suppose that $\sigma$ acts on $E$ by translation and on $\Pp^1$ by $x \mapsto -x$. Then, the natural morphism $E \times^K \Pp^1 \to E/K$ is the MRCC fibration (it is also the Albanese morphism). Note that $E = \Aut^0(X)$ which acts on the first factor of $E \times^K \Pp^1$. Hence $\theta: \Aut^0(X) \to \Aut^0(E/K)$ is exactly the quotient morphism 
\[
E \to E/K.
\] This is not the Chevalley decomposition of $\Aut^0(X)$ but an isogeny to it.
\end{remark}

\begin{example}
The following examples on ruled surfaces unify the discussions in the paper. 

Let $E$ be an elliptic curve and $\Ee$ be a rank $2$ vector bundle on $E$. Then, $f: X \coloneqq \Pp_E(\Ee) \to E$ is both the Albanese morphism and the MRCC fibration of $X$. Let $b: \Aut^0(X) \to \Aut^0(E)=E$ be the natural group homomorphism in Blanchard's lemma.

(1) Let $\Ee = \Oo_E \oplus \Ll$, where $\Ll$ is a line bundle on $E$ with $\deg \Ll =0$. Then, we have the exact sequence 
\[
0 \to {\mathbb G}_m \to \Aut^0(X) \xrightarrow{b} E \to 0,
\] where ${\mathbb G}_m$ is the multiplicative group $\Cc^*$. Therefore, we have $A(\Aut^0(E))=E$ and $\Aut^0(X)_{\rm aff}={\mathbb G}_m$. Besides, we have $\Oo_X(K_X)=f^*\det \Ee \otimes \Oo_X(-2)$, and thus $-K_X$ is nef. The locally trivial fibration $f$ is a locally constant fibration, and it splits to a product after a finite \'etale base change if and only if $\Ll$ is a torsion line bundle.

(2) Let $\Ee$ be an indecomposable vector bundle which sits in an exact sequence 
\[
0 \to \Oo_E \to \Ee \to \Oo_E \to 0.
\]Then, we have the exact sequence 
\[
0 \to {\mathbb G}_a \to \Aut^0(X) \xrightarrow{b} E \to 0,
\] where ${\mathbb G}_a$ is the additive group $\Cc$. Therefore, we have $A(\Aut^0(E))=E$ and $\Aut^0(X)_{\rm aff}={\mathbb G}_a$. Besides, we have $\Oo_X(K_X)=f^*\det \Ee \otimes \Oo_X(-2)$, and thus $-K_X$ is nef. The locally trivial fibration $f$ is a locally constant fibration. However, it does not split to a product after any finite \'etale base change.

(3) Let $\Ee = \Oo_E \oplus \Ll$, where $\Ll$ is a line bundle on $E$ with $\deg \Ll <0$. Then, $X \to E$ admits a section $C_0$ with $C^2_0 = \deg \Ll$. One can contract $C_0$ to get a birational morphism $g: X \to Y$ with $Y$ a cone over the elliptic curve $E$. As $Y$ is rationally chain-connected, the MRCC fibration of $Y$ is a morphism to a point. Therefore, $\Aut^0(Y)$ is a linear algebraic group. As $\Aut^0(X) \to \Aut^0(Y)$ is an injective group homomorphism, $\Aut^0(X)$ is also a linear algebraic group. Therefore, $b: \Aut^0(X) \to \Aut^0(E)$ is the trivial homomorphism and $\Aut^0(X)_{\rm aff}=\Aut^0(X)$. By Lemma \ref{lem: lift vector field}, $f$ cannot be a locally constant fibration. In fact, $-K_X$ is not nef as $-K_X\cdot C_0 = \deg \Ll <0$.

Examples (1) and (2) can be found in \cite[Example 4.2.4]{BSU13}. \cite[Theorem 3]{Mar71} describes the automorphism group schemes of all ruled surfaces. The fact that $b$ is trivial in Example (3) can be directly obtained from \cite[Lemma 7]{Mar71}.
\end{example}

\bibliographystyle{alpha}
\bibliography{bibfile}
\end{document}